\def\RSthmtxt{theorem~}\newref{thm}{name = \RSthmtxt}}
\def\RSlemtxt{lemma~}\newref{lem}{name = \RSlemtxt}}
\numberwithin{equation}{section}
\numberwithin{figure}{section}
\theoremstyle{plain}
\newtheorem*{fac*}{\protect\factname}
\theoremstyle{remark}
\newtheorem*{rem*}{\protect\remarkname}
\theoremstyle{plain}
\newtheorem{thm}{\protect\theoremname}[section]
\theoremstyle{definition}
\newtheorem{defn}[thm]{\protect\definitionname}
\theoremstyle{plain}
\newtheorem{lem}[thm]{\protect\lemmaname}
\theoremstyle{remark}
\theoremstyle{plain}
\newtheorem{fact}[thm]{\protect\factname}
\theoremstyle{plain}
\newtheorem{cor}[thm]{\protect\corollaryname}
\theoremstyle{plain}
\newtheorem{conjecture}[thm]{\protect\conjecturename}
\theoremstyle{plain}
\theoremstyle{plain}
\providecommand{\claimname}{Claim}
\providecommand{\corollaryname}{Corollary}
\providecommand{\conjecturename}{Conjecture}
\providecommand{\factname}{Fact}
\providecommand{\lemmaname}{Lemma}
\providecommand{\remarkname}{Remark}
\providecommand{\definitionname}{Definition}
\providecommand{\theoremname}{Theorem}
\begin{document}

\title{ High dimensional expansion using zig-zag product \\ \small
	(Working Draft) }

	\author{ $\text{Eyal Karni}^\dagger$  $\text{Tali Kaufman}^{\#}$  }
\address{This work was conducted as part of a master's dissertation in mathematics}
\address{for Bar Ilan University, ISRAEL}

\address{$\dagger$ Bar Ilan University, ISRAEL. Email: eyalk5@gmail.com} 
\address{\# Bar Ilan University, ISRAEL. Email:  kaufmant@mit.edu} 
\begin{abstract}
	We wish to renew the discussion over recent combinatorial structures that are 3-uniform hypergraph expanders,
	viewing them in a more general perspective, shedding light on a previously unknown relation to the zig-zag product.
	We do so by introducing a new structure called triplet structure, that maintains the same local environment around each vertex.
	The structure is expected to yield, in some cases, a bounded family of hypergraph expanders whose 2-dimensional random walk converges.

	We have applied the results obtained here to several known constructions, obtaining better expansion rate than previously known. Namely, we did so in the case of Conlon's construction and the $S=[1,1,0]$ construction by Chapman, Linal and Peled. 
	
\end{abstract}

\date{\today}

\maketitle
\section{Introduction}

The notion of expansion is a key one in graphs, having many applications to a variety of mathematical problems. It generally means how much easy is it to get from one vertex set to another in a graph. Among its applications are practical such as error correcting codes \cite{sipser1994expander} which could be used in communication, and theoretical as it was used to prove the PCP theorem \cite{dinur2007pcp}.

Over the last two decades or so, there have been various attempts to generalize this notion to higher dimensions. That means to talk about expansion in hypergraphs. There has been a growing interest in this field, motivated partially by its usefulness to constructing quantum error correcting codes. But it was speculated that it would help to solve some theoretical questions that are out of reach for expander graphs otherwise (LTC\footnote{Local Testable Codes} as an example \cite{kaufman2013high}).

The various attempts used various definitions for the expansion, in contrary to 1-dimensional (or graph) case.  We will mainly deal with the definition of $2D$-random walk convergence that was originally appeared in \cite{kaufman2016high}.  In the simple case of 3-uniform expander, this is essentially the random walk convergence rate of the random walk in the auxiliary graph (definition \ref{def:aux}).  Unfortunately, it seems that the auxiliary graph can't be such a good expander, as we have reasons\footnote{This is true in case the links are good enough expander} to believe that the convergence rate is bounded from below by $\nicefrac{2}{3}$ \cite{kaufman2017high}.  This notion also relates to agreement expanders, as appears in \cite{dinur2017high}.

As it is generally difficult, the construction of bounded-degree HDE is typically explicit and based on heavy algebraic tools. Explicit means that there is a concrete family of hypergraphs that satisfy the required expansion. Until recently, the main construction which is bounded-degree and satisfies strong HDE\footnote{High Dimensional Expansion} properties($2D$-random walk convergence but also some stronger notions), arises from Ramanujan complexes \cite{lubotzky1988ramanujan}. 

The innovation of David Conlon in his construction \cite{conlon2017hypergraph} was that he used combinatoric method to allow one to take random set of generators of Cayley graphs and to provide an 3-dimensional hypergraph built upon this that satisfies some higher-dimensional expansion properties including the geometric-overlapping property and the 2D random walk.  The drawback of this construction is that it is built upon abliean groups.

Abelian groups are known to have poor expansion properties. Indeed, Alon-Roichman's paper \cite[Proposition 3]{alon1994random} suggests that in order to achieve $\epsilon$-expansion for abliean family $G$ of order $n$, one needs at least $C\ \log\ n$ generators. Thus, its construction couldn't be of bounded degree.

Recently, some progress in constructing bounded-degree HDE in combinatorial ways has been made.  \cite{chapman2018expander} studied regular graphs whose links are regular.  They achieved some bounds concerning the maximal non-trivial eigenvalue for such graphs. And they constructed a hypergraph expander from expander graph whose $2D$-random walk converges rapidly\footnote{The graph should be of high girth}.

Even more recently, \cite{liu2019high} provided a construction that is based on any graph and achieves constant spectral gap on any high order random walk.
We will just note that this work was done independently of their work.

These structures share some common properties. Particularly, they use graph product of good expander graphs, in order to achieve the sort of commutativity that lies in the heart of Conlon's structure.  This typically\footnote{Some other cycles where considered in \cite{chapman2018expander} } results in some 4-cycles or commutative generators (in case of Cayley graphs) that are used to provide expansion as they are shared in several triplets (and centers). 

We believe the triplet structure we suggest here encompasses the essence of this phenomenon.  Basically, we look at a family of such constructions that involves applying a certain local 3-dimensional structure around each vertex, in a way similar to the way Cayley graph applies the same set of generators to each vertex. This results in edges that are shared between centers, and ultimately allows us to achieve 2-dimensional random walk convergence depending on inherited properties of the group and the local construction. We were able to achieve this, for example, in all bounded cases of this construction.

In contrary to the traditional method of using links, we have analyzed the convergence rate of the hypergraph in our construction by looking at the auxiliary graph of the random walk(definition \ref{def:aux}), and relating it to other equivalent graphs, and finally to the previously known zig-zag product.  And so, as an application of our results, we were able to achieve a better convergence rate\footnote{than achieved in the original papers} in some known constructions. 

\medskip \medskip Namely, in the following constructions: 
\begin{enumerate}
	\item Conlon's construction
	   
	\item The 3-product construction, which is a slight adaptation of \cite{chapman2018expander}
	 in the case $[1,1,0]$
\end{enumerate}

Possibly, other expansion properties of these constructions could be investigated more easily by this approach.
And possibly some other interesting cases could be studied. 

\section{Preliminaries}
We will introduce some general definitions first, and then introduce our structure. 
\subsection{Basic definitions }
We introduce here some  basic definitions. But we do assume some familiarity with graphs and expanders.
We refer the reader to \cite{hoory2006expander} for further knowledge about expanders, lifts and the zig-zag product. 
\begin{defn}
Let $d=\lambda_{1}\ge\lambda_{2}...\ge\lambda_{n}$ be the eigenvalues
of $A$, the adjacency matrix of $G$ 
\[
\lambda(G):=max_{\lambda_{i}\neq\lambda_{1}}|\lambda_{i}(A)|
\]
$\lambda$ is the maximal non-trivial eigenvalue. \\
\label{definition:A-family-of}\par A family of graphs $\{G_\alpha\}_\alpha$ 
	is (at-least) $\epsilon$-expanding if 
\[
	\forall G\in \{G_\alpha\}_\alpha\ \epsilon \le 1-\lambda(G) 
\]
\end{defn}

\begin{defn}
\label{definition:CayleyGiven-a-group}Given a group G, with a set of
generators $S$, satisfying $S=S^{-1}$, the Cayley graph $Cay(G,S)$
is the undirected graph with vertex set G and edge set \{$(sg,g)\ |\ g\in G\ s\in S$\}.
\end{defn}

The Cayley graph allows one to examine the abstract structure of a
group as a graph. For certain groups, it has excellent expansion properties.

\begin{defn}
$H=(V,E)$ is an\emph{ hypergraph} where $E\subseteq P(V)$\footnote{Each hyperedge is a subset of V} 

$V$ are called the \emph{vertices} and $E$ are the \emph{hyperedges}. 
We will only look at 3-uniform hypergraphs here (where all hyperedges have the same size $3$).

We also have the following related definitions: 

\begin{itemize} 
\item $T(H)=\{ \tilde{t} \in E\ \mid \ |\tilde{t}|=3\}$ is the set of triplets (or hyperedges).
\item $E^{1}(H)$ is the collection of subsets of size 2 of every in $T(H)$. 
\item The vertices, together with the edges in $E^{1}$ form a graph known as the 1-skeleton of H
\item We will refrain from using the word edges for $e\in E$ and instead
use it for $\mathcal{ E }\in E^{1}(H)$ 

($\mathcal{ E }$ specifies a general element in $E^1$)

\item The degree of a vertex is the number of hyperedges that contains it.
\item A hypergraph is $k-\emph{regular}$ if every vertex has degree $k$.
\item A hypergraph is $k-\emph{edge regular}$ if every edge ($\mathcal{E} \in E^{1}$) is contained in exactly $k$ triplets. 
\end{itemize}

\end{defn}

\begin{defn}
	\emph{$2D$-random walk} on a 3-uniform hypergraph is defined to be a
	sequence of edges $\mathcal{ E }_{0},\mathcal{ E }_{1},\cdot\cdot\cdot\in E^1(H)$ such that\footnote{These definitions were taken from \cite{conlon2017hypergraph} By Conlon.
		Original def. from \cite{kaufman2016high} by Kaufman and Mass}
\begin{enumerate}
	\item $\mathcal{ E }_{0}$ is chosen in some initial probability distribution $p_{0}$
	 on $E^{1}$(H)
	\item for every $i>0$, $\mathcal{E}_{i}$ is chosen uniformly from the neighbors of
	 $\mathcal{E}_{i-1}$. That is the set of $f\in E^{1}$ s.t. $\mathcal{E}_{i-1}\cup f$
	 is in $T$
\end{enumerate}

\end{defn}
\subsection{\label{subsec:Structures}Triplet structure}

We will need several definitions first.

Let $G$ be a group\footnote{The mentioned notation conventions will be used implicitly throughout the paper, unless defined otherwise} ($g$ specifies a general element in $G$, but also $a,b,c$ or $s,s',s''$).

Let $\mathcal{S}\subset{G \choose 3}$ be a set of triples ($\tilde{s}$
specify a general element in $\mathcal{S}$). 
\begin{defn}
	$\mathcal{T}={\mathcal{S} \choose 2}$ is the set of types ($\tau=\{\tau_{1},\tau_{2}\}$
	is an element in $\mathcal{T}$). \\
We have the ordered version of it 
\[\mathcal{T}_{o} = \text{\{the ordered
2-subsets of $\mathcal{S}$\}}\] 
    ($t=(t_{1},t_{2})$ is an element in $\mathcal{T}_{o}$)

\end{defn}
\begin{defn}We define hypergraph $H=St(G,\mathcal{S})$ by specifying its triples , using the mapping:
	\[St:G\times\mathcal{S}\rightarrow T(H)\]
	\[(g,\{s,s',s''\})\rightarrow\{sg,s'g,s''g\}\] 

	$T(H)$ is the set of triples in $H$($\tilde{t}$ is a general element
in $T(H)$ ).

$St$ can be seen as a product:  \[G \times \mathcal{S} \rightarrow T(H)\]
And so,
\[
	T(H)=\{\tilde{s}g\ |\tilde{s}\in \mathcal{S}, g\in G \}
\]

A hypergraph defined in this way, and satisfies additional conditions(in \ref{sub:conditions}) will be called a \textbf{triplet structure}.

\end{defn}
\begin{defn}
	The 1-skeleton of $H$ is
 \[
     E^{1}:=\{\{ u,v \}\ |\ \{ u,v,w \}\in T\ \text{for some }w\in V(H)\}
	\]
	
	But it could also be described as
 \[
	 E^{1}= \{ \{ sg,s'g \}\ |\ \{s,s' \} \in\mathcal{T},\ g\in G\ \} 
	\]
\end{defn}

While it is tempting to look at the type of edge by the way upon which it acts on a vertex on the 1-skeleton (i.e. 
$(sg,s'g)\text{ is of type }s^{-1}s'$), 
viewing an edge $(ag,bg)\in E^{1}$ as of type $\{a,b\}$
in center $g$ would be more useful to us.
This is why we define a function $e$.

\begin{defn}
	$e$ gets the edge by center and type 
	 \[e:G\times\mathcal{T}\rightarrow E^{1}\]
	\[e(g,\tau)=\{\tau_{1}g,\tau_{2}g\}\]

	And similarly \[e_{o}:G\times\mathcal{T}_{o}\rightarrow E_{o}^{1}\]
	is defined by $e_{o}(g,t)=(t_{1}g,t_{2}g)$ (the ordered skeleton edges).
\end{defn}

\begin{defn}
 \label{ndef}
	An edge $w$ is in center $c$ if \[\exists\tau\text{ s.t }e(c,\tau)=w\]
	 \\ The set of centers of $w$ will be a function $c:E^{1}\rightarrow P(G)$
\end{defn}

We have a natural inverse in $\mathcal{T}_{o}$. That is
\[
	(a,b)^{-1}:=(b^{-1},a^{-1})
\]
And for every $\tau={ \tau_1,\tau_2 }$ such that $\tau_1 \tau_2 = \tau_2 \tau_1$, 
we have a natural product  
\[\tau \cdot g = \tau_1\tau_2 g\]
We need another definition here. \\
	\par When viewing $\mathcal{S}$ as a 3-uniform hypergraph, we have that $L$ is the edge-triple adjacency graph. 

	\begin{defn}\label{graphl} The graph $L$ is defined by 
 
	\[V(L)=\mathcal{T}\] \[\tau\sim\tau'\text{ }\iff\text{ }\exists\tilde{s}\in\mathcal{S}\text{ s.t. }\tau,\tau'\subset\tilde{s}\]
\end{defn}

\subsection{Conditions}%
\label{sub:conditions}

So that we can work with the structure, we will need several conditions.
\begin{enumerate}
	\item[Condition \textbf{0}] \label{itemZ} $\mathcal{T}$ doesn't contain an element $\{s,s^{-1}\}$
	\item[Condition \textbf{A}] \label{itemA} $\mathcal{T}_{o}$ should be symmetric.
		That is if $(a,b)\in\mathcal{T}_{0}$ then $(a,b)^{-1}\in\mathcal{T}_{o}$. \\
	 This condition is needed for the hypergraph to have a skeleton that is undirected.
 \item[Condition \textbf{B}] \label{itm:itemB} $E^{1}$ contains no non-trivial 4-cycles that is other than the ones arise by the commutative relation if exists, 
	 that is for $t \neq t' \in \mathcal{T}_o$, s.t.
	 \[t_1 t^{ -1 }_2=t'_1(t'_2)^{-1} \implies\] 
	 \[t'_2=t_1^{-1} \newline \text{ and } t'_1=t_2^{-1} \]
	 Another way to put it is that it implies\footnote{with respected to the defined inverse in $\mathcal{T}_o$ }   that $t'=t^{-1}$. We also get that $t_1$ and $t_2$ commutate. 
	 That also assures that $St$ is bijective.
	 Any triplet has just one description in terms of $G,\mathcal{S}$ \footnote{ 
	 Indeed, if \[t_1g,t_2g=t'_1g',t'_2g'\] such that $g'\neq g$  then \[t_1 t^{ -1 }_2=t'_1(t'_2)^{-1} \] 
     and it leads to $g'=t^{-1}g$. But it can also be done with $t_1g,t_3g$.   }
     \item[Condition \textbf{C}] \label{itemC} \[\forall\tau \in \mathcal{T}\ \#\{c\in {\mathcal{S}\choose{1}}  \ s.t.\ \tau\cup c\in\mathcal{S}\}=\tilde{d}\] That is also to say that $H$ is edge regular.
 \item[Condition \textbf{D}] \label{itemD} $L$ should be connected (otherwise, we have no hope of achieving expansion) 
\end{enumerate}

\begin{enumerate}
	\item [Condition \textbf{E}] \label{itemE} for every $\tau=\{a,b\}$, we have $a,b$ commute.
\end{enumerate}

\medskip  \medskip A structure that satisfies all the mentioned conditions will be called \textbf{commutative triplet}.

We will noted such constructions by 
\[H=Cts( G,\mathcal{S} )\]

\begin{rem*}
     \medskip  \medskip Any $H$ that satisfies all  conditions $0-D$ above will be called a \textbf{triplet structure}.    We will not talk about triplet structures here in general, but will mention them in the concluding remarks.
\end{rem*}


\medskip  \medskip For random walk on $H$, we define an auxiliary graph $G_{Walk}$:
\begin{defn} \label{def:aux} The graph $G_{walk}$ is defined by 
	
	\[V(G_{Walk})=E^1(H)\]
	\[\mathcal{ E }\sim\ \mathcal{ E }'\text{ }\iff\text{ }\exists\ \tilde{t}\in\ T(H)\text{ s.t. }\mathcal{ E },\mathcal{ E }'\in\ \tilde{t} \]
\end{defn}

We define $G_{cay}$ to be $Cay(G,\mathcal{ T })$
\[
\{ (g,\tau g)\ |\ \tau \in \mathcal{T} \} 
\]
\begin{rem*}
    Here we abuse notation.

    $\tau=\{a,b\}$ $\tau\cdot g$ corresponds to $abg=bag$ 
\end{rem*}
Notice that since $\mathcal{T}$ is symmetric, this graph is well-defined and undirected.
\section{Overview of the paper}
\subsection{Main Construction}%
\label{ssub:overview_of_the_paper}

\medskip \medskip Our main theorem suggest that the expansion property of $H=Cst(G,\mathcal{ S })$ could be deduced from the expansion
properties of both $G_{ cay }$ and $L$.  $L$ is simply the 2-dimensional random walk convergence on $\mathcal{S}$, that is the local structure. And $G_{ cay}$ is a Cayley graph generated by the two subsets of $\mathcal{S}$. 

To relate them, we do several steps: 

\begin{enumerate}[itemsep=9pt]
    \item 
	We consider the random walk on the hypergraph $H$ as a random walk on the auxiliary graph $G_{walk}$.
    \item  $G_{rep}:=G_{cay}\raisebox{.5pt}{\textcircled{\raisebox{-.9pt}{r}}}L$ (definition \ref{def:We-define-graph})

	\medskip \medskip We build the replacement graph of $G_{cay}$ and $L$, that will be called $G_{rep}$. A vertex in this graph can be thought of as an alternative description of an edge. That is being in a center $c \in V(G_{cay})$ and of a certain type $\tau \in V(L)$. 
    \item Translating the random walk to the alternative description (section \ref{mainpart})

	\medskip \medskip The original random walk can be translated to a random walk in the alternative description \footnote{The proof just use this fact as an intuition }. In this description we are doing a random walk on the graph $G_{rep}$. It is not a standard random walk, but instead it is done by operator $T$ over the graph(to be defined).

\medskip \medskip 	We notice that the edges of $G_{rep}$ can be colored in two colors, where red edges are inside each center, and blue edges relate two different centers. 
	A step in the original random walk mix inside the original center\footnote{The center that "it came from", or that is common to the previous vertex} in probability $\nicefrac{1}{2}$, and in probability $\nicefrac{1}{2}$ mix in another center. The intuition for this is described in \cref{sec:randcon}.
	Therefore, the original random walk is modeled as an \textbf{operator $T$} over the graph $G_{rep}$.
	This operator can be regarded as an operator that in probability $\nicefrac{1}{2}$ picks a red edge, and in probability $\nicefrac{1}{2}$ picks a blue edge and after that a red edge. 
    \item  $ \lambda(G_{walk}) \le \lambda( T )$ (corollary \ref{corlift})

	\medskip \medskip We now regard $T$ as a new graph. We don't relate a random walk on $G_{walk}$ to that on $T$ directly(which is possible), but we rather prove that $T$  is a lift of $G_{walk}$, thus it has at least as good expansion as $G_{walk}$

    \item $ \lambda(T^2) \le \frac{1}{2}+\frac{1}{2}\lambda(G_{cay}\raisebox{.5pt}{\textcircled{\raisebox{-.9pt}{z}}}L)$ (lemma \ref{lem:In-case-there})

	\medskip \medskip The operator $T^2$ picks edges that looks like red blue red in probability $\nicefrac{1}{2}$. Zig-zag product is defined as those edges over a replacement graph. And we know that Zig-zag product is good in terms of expansion. 
\end{enumerate}

\medskip \medskip We get the following result, which is our main theorem.
\begin{restatable*}{thm}{mainthm}
 $\lambda(G_{walk})\le \sqrt{\frac{1}{2}+\frac{1}{2}\lambda(G_{cay}\raisebox{.5pt}{\textcircled{\raisebox{-.9pt}{z}}}L) }$ 
\end{restatable*}

Thus, the expansion properties of $H$ are somehow the middle ground between the expansion properties of the mentioned graphs, in a way similar to that of the Zig-zag product. This is a consequence of this similarity.

\subsection{Applications}%
\label{sub:cases}

\par 
We will apply this to a several interesting cases, to achieve a better results than originally achieved, by more primitive expansion calculation.

\subsubsection{The Construction By Conlon}%
\label{ssub:conlon}

Conlon looked at $Cay(G,S)$ where $S$ is a set of generators with no non-trivial 4-cycles. He built a hypergraph $H$ which is based upon triples of this graph. The triples of $H$ are composed of 3 different vertices adjacent to the same vertex, and were divided to cliques naturally.

\medskip  \medskip We have a set $S\subset G$ s.t. $S=S^{-1}$.\\ We define hypergraph $H$ by its triples:

\[
	T(H)=\{s_{a}g,s_{b}g,s_{c}g\ |s_{a},s_{b},s_{c}\in S\ distinct\ and\ g\in G\}
\]

In our case it is enough to define $\mathcal{S}={S \choose 3}$
and we have that \[H=Cts( G,\mathcal{S} )\]

We require that there are no non-trivial 4-cycles in $Cay(G,S)$
for condition \hyperref[itm:itemB]{\textbf{B}} to be satisfied. Indeed, this is equivalent, because a non-trivial $4$-cycle in $Cay(G,S)$ is
\[
	abcd=e
\]
 \[a,b,c,d\in\ S\text{ s.t. }\{c,d\}\neq\{a,b\}\]
 Which contradicts condition \hyperref[itm:itemB]{\textbf{B}}. \\

So far we have described a small generalization of Conlon's construction. To describe it specifically, we require that $G=\mathbb{F}_{2}^{t}$ and the product
is additive.\\

It is easy to see that the rest of the conditions are satisfied in this case.

We have the following result:

\begin{restatable*}{cor}{conlons}
\label{conlon}
	Assuming
	\[G=\mathbb{F}_{2}^{t}\]
	\[S\subset\ G\text{ such that }\]
	\[a+b=c+d\text{
			only in trivial case }\text{ (}a,b,c,d\in\ S)\]

	Then for \[\mathcal{S}={S \choose 3}\text{ }H=Cts( G,\mathcal{S} )\]

	in terms of the original graph \[\lambda=\lambda(Cay(G,S))\]
	we have that the convergence rate on $2D$-random walk is \[
	 \frac{\sqrt{3}}{2} +\frac{1}{2\sqrt{3}}\lambda^2 + O(\frac{1}{d})
	\] 
\end{restatable*}

\begin{rem*}
	In terms of expansion of the auxiliary graph,
	we get as $\varepsilon\rightarrow 1$, asymptotic behavior of 
\[
	1 - \frac{\sqrt{3}}{2} - \frac{(\varepsilon - 1)^2}{2 \sqrt{3}} 
\]

	compared to 

	\[
		\frac{\epsilon^{4}}{2^{15}}
	\]

	achieved in \cite{conlon2017hypergraph}. 
	This is asymptotically better.
\end{rem*}

\subsubsection{The 3-product case}%

Chapman, Linal and Peled described a construction called Polygraph in the paper \cite{chapman2018expander}. We will describe it very briefly, and refer the reader to the paper for further explanation.

In this construction, one takes a graph $G$ with large enough girth and a multiset of numbers $S$.
And one defines a graph $G_S$ called polygraph. 
\par The vertices of $G_S$ are ${V(G)}^m$ (tensor product) 

Two vertices $(x_1\ldots x_n),(y_1\ldots y_n)$ are adjacent if the collection $(d(x_i,y_i)\ |\ 1\le i\le m )$ is equal as a multiset to $S$,
where $d$ is the distance function on the graph. 

Finally, one takes the cliques complex of this hypergraph $\mathcal{C}_{G_{S}^{(2)}}$.

\begin{restatable}[3-Product-Case]{defn}{thrprodcase}
\label{def:3-product-case} 

	Given $G_1,G_2,G_3$ groups,\\ with $S_{i}\subset G_{i}$
	s.t.
	\begin{enumerate}
		\item $S_{i}=S_{i}^{-1}$
		\item $d:=|S_{i}|=|S_{j}|$
		\item $|G_{i}|=|G_{j}|$
	\end{enumerate}
	We define
	\[
		G:=G_{1}\times G_{2}\times G_{3}
	\]

	\[
		T=\{s_{1}g_{1},s_{2}g_{2},s_{3}g_{3}\ |\ g_{i}\in G_{i},s_{i}\in S_{i}\}
	\]

	and $H$ defined to be the hypergraph generated by these triples.\\

	We can also describe it as a triplet structure by defining \[\mathcal{S}=\{S_{1},S_{2},S_{3}\}\]

	and \[
		H:=Cts( G,\mathcal{S} )
	\]
\end{restatable}

\medskip \medskip  For $S=[1,1,0]$, $\mathcal{C}_{G_{S}^{(2)}}$ is the same as the hypergraph $H$, in the specific case $G_1=G_2=G_3$.
So, we provide a slight generalization of the $[1,1,0]$ case, as we allow taking different base graphs. 
On the other hand, we force all the graphs to be Cayley graph.

\par We will prove later that it satisfies all the conditions.
We have the following theorem:
\begin{restatable}{thm}{thrprod}
	Given $\lambda_{i}=\lambda(Cay(G_{i},S_{i}))$ ordered s.t. $\lambda_{1}\le\lambda_{2}\le\lambda_{3}$
	the random walk on $H$ defined in \defref{3-product-case}, converges
	with rate $\sqrt{\frac{1}{2}+\frac{1}{2}f(\frac{1+2\lambda_{3}}{3},\frac{1}{2})}$
	where $f$ is the zig-zag function\footnote{originally defined in \cite[Theorem 3.2]{reingold2002entropy}} from \cref{equation:zig-zag} \\ ( $f(a,b)\le a+b$, $f<1$ where
 $a,b<1$ ). 
\end{restatable}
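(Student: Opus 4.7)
The plan is to reduce the problem to the main theorem $\lambda(G_{walk}) \le \sqrt{\tfrac{1}{2} + \tfrac{1}{2}\lambda(G_{cay}\textcircled{z} L)}$, which leaves me to (i) check that the construction is a commutative triplet structure, (ii) compute $\lambda(G_{cay})$ and $\lambda(L)$ explicitly, and (iii) invoke the zig-zag eigenvalue bound. The conditions are essentially transparent from the product structure: since $s_i \in S_i$ and $s_j \in S_j$ lie in different factors of $G = G_1\times G_2 \times G_3$, they commute (condition E), and any would-be 4-cycle relation $t_1 t_2^{-1} = t_1'(t_2')^{-1}$ can be read off coordinate by coordinate, forcing either $t = t'$ or $t' = t^{-1}$ (condition B). Condition A is inherited from $S_i = S_i^{-1}$; condition C with parameter $d$ is by construction (each type is contained in the $d$ triples obtained by adjoining one element of the remaining $S_k$); and condition D follows by a short path-chasing argument in $L$.

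For $\lambda(G_{cay})$, the generator set $\mathcal{T}$ of $Cay(G,\mathcal{T})$ splits into three ``color classes'' corresponding to the three pairs of factors, and the normalized adjacency operator decomposes as the commuting sum
\[
\tfrac{1}{3}\bigl(A_1\otimes A_2 \otimes I + A_1\otimes I \otimes A_3 + I\otimes A_2\otimes A_3\bigr),
\]
where $A_i$ is the normalized adjacency of $Cay(G_i, S_i)$. The three summands act on different pairs of factors, so they commute and are simultaneously diagonalizable; the eigenvalues are $\tfrac{1}{3}(\alpha\beta + \alpha\gamma + \beta\gamma)$ with $\alpha,\beta,\gamma$ eigenvalues of $A_1, A_2, A_3$. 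Discarding the trivial combination $(1,1,1)$ and maximizing the absolute value over the rest, a case analysis (using $1+2\lambda_3 \ge \lambda_2 + \lambda_3 + \lambda_2\lambda_3$ and similar inequalities coming from $(1-\lambda_2)(1+\lambda_3)\ge 0$) shows the extremum is attained at $(1,1,\lambda_3)$, giving $\lambda(G_{cay}) = \tfrac{1+2\lambda_3}{3}$.

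For $\lambda(L)$, decompose $\ell^2(\mathcal{T}) = \bigoplus_{i<j}\ell^2(S_i\times S_j)$ using the product decomposition $(\mathbb{R}\mathbf{1}_i \oplus W_i)\otimes(\mathbb{R}\mathbf{1}_j \oplus W_j)$, where $W_i$ is the mean-zero subspace of $\ell^2(S_i)$. A direct computation shows the transition operator sends $(g_{12},g_{13},g_{23})$ to a vector whose $(i,j)$-block is $\tfrac{1}{2}$ times a sum of partial averages of $g_{ik}$ and $g_{jk}$ over the remaining coordinate. This kills any $W_i\otimes W_j$ component. What remains are two kinds of invariant subspaces: the three-dimensional space of color-constants (on which the operator acts as $\tfrac{1}{2}(J-I)$ on $3$ colors, spectrum $\{1, -\tfrac{1}{2}, -\tfrac{1}{2}\}$) and, for each $i$ and each $u\in W_i$, a two-dimensional block between the two colors containing $S_i$ with matrix $\tfrac{1}{2}\left(\begin{smallmatrix}0 & 1\\ 1 & 0\end{smallmatrix}\right)$, contributing eigenvalues $\pm\tfrac{1}{2}$. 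Consequently $\lambda(L) = \tfrac{1}{2}$.

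Applying the zig-zag eigenvalue bound $\lambda(G_{cay}\textcircled{z} L) \le f(\lambda(G_{cay}), \lambda(L))$ together with monotonicity of $f$ in its first argument, the main theorem then yields
\[
\lambda(G_{walk}) \le \sqrt{\tfrac{1}{2} + \tfrac{1}{2}\, f\!\left(\tfrac{1+2\lambda_3}{3},\tfrac{1}{2}\right)},
\]
as claimed. The main obstacle is the spectral analysis of $L$: the invariant-subspace decomposition has to be set up carefully so that the $-\tfrac{1}{2}$ eigenvalues from the color layer and the $\pm\tfrac{1}{2}$ eigenvalues from the ``depends on a single coordinate'' blocks are accounted for exactly once, and the high-frequency pieces $W_i\otimes W_j$ are correctly identified as being annihilated.
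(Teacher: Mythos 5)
Your proposal is correct and follows the same overall skeleton as the paper: verify the commutative-triplet conditions, feed $\alpha=\lambda(G_{cay})$ and $\beta=\lambda(L)$ into the corollary $\lambda(G_{walk})\le\sqrt{\tfrac12+\tfrac12 f(\alpha,\beta)}$, and compute $\lambda(G_{cay})$ via exactly the same tensor decomposition $\tfrac13(A_1\otimes A_2\otimes I+A_1\otimes I\otimes A_3+I\otimes A_2\otimes A_3)$ with eigenvalues $\tfrac13(\alpha\beta+\alpha\gamma+\beta\gamma)$. The one place you genuinely diverge is $\lambda(L)=\tfrac12$: the paper exhibits an explicit isomorphism between $L$ and the link graph $L'$ of Chapman--Linial--Peled and imports their Lemma 4.1, whereas you compute the spectrum of $L$ from scratch by decomposing $\ell^2(\mathcal{T})$ into the color-constant layer (acting as $\tfrac12(J-I)$ on three colors), the single-coordinate blocks $W_i\otimes\mathbf{1}$ paired across the two templates containing $S_i$ (each a $\tfrac12\left(\begin{smallmatrix}0&1\\1&0\end{smallmatrix}\right)$ block), and the annihilated high-frequency pieces $W_i\otimes W_j$; this checks out, including the dimension count $3+6(d-1)+3(d-1)^2=3d^2$. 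Your route is self-contained and makes the $\tfrac12$ transparent, at the cost of the invariant-subspace bookkeeping you flag; the paper's route is shorter but leans on an external reference. You are also more careful than the paper in justifying that the extremal nontrivial eigenvalue of $G_{cay}$ is $\tfrac{1+2\lambda_3}{3}$, via $(1-\lambda_2)(1+\lambda_3)\ge 0$ and its analogues --- the paper merely asserts that $(1,1,\lambda_3)$ is maximal. (Both of you treat $\tfrac{1+2\lambda_3}{3}$ as attained rather than merely an upper bound, ignoring the possibility that the extreme eigenvalue of $A_3$ is negative, but only the upper bound is needed for the stated conclusion.)
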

\begin{rem*}
 In terms of expansion of the auxiliary graph,
 we get as $\varepsilon\rightarrow 1$, asymptotic behavior of 
 \[
	1 - \sqrt{\frac{22}{24}} - \frac{1-\varepsilon}{\sqrt{33}} 
 \]
 That is asymptotically better compared to the results in
 \cite{chapman2018expander}, which were 
 \[
 \frac{\epsilon^{4}}{8\cdot10^8}
 \]
\end{rem*}
\section{Commutative Triplet structure Properties}
Here we prove some properties of the structure.
\subsection{Basic properties}
\label{sub:basic_properties}
We assume all along a commutative triplet structure $H$.
We look at $L$ defined in \ref{graphl}. 
Note that condition \hyperref[itemC]{\textbf{C}} suggests that $L$ is $2\tilde{d}$ regular because there are $\tilde{d}$ triplets to choose from.
Each introduces 2 distinct edges to choose form\footnote{ Because if $\tau\cup {c}\text{ and }\tau\cup{c'}$ 
both contain $\{a,b\}$, then $\tau=\{a,b\}$.}

As an illustrative example, in case of Conlon,\[L=J(S,2)\] where $J$ is the Johnson graph.
\begin{defn}
	$J(S,n)$ is the Johnson graph

	\begin{center}
		$V(J)={S \choose n}$ and $v\sim v'$ if $|v\cap v'|=n-1$
		\par\end{center}

		For example, in case $n=2$ \\ \[\{a,b\}\sim\{c,d\}\text{ if both sets share one element} \]
\end{defn}

Notice that $L$ is a subgraph of the $J({\mathcal{S} \choose{1} } ,2) $ graph,
because two elements must have an intersection of size 1,
in order to possibly be adjacent. 

%


We can describe a random walk on the hypergraph
as a random walk on types (that is random walk on $L$), and a random
walk on centers. We intend to construct a graph on the types and centers that would reflect this. 

As in the case of the original construction, the commutativity requirement
translates into an edge being in two centers. And being in two centers leads
to expansion properties of the hypergraph as the random walk
progresses\footnote{This intuition was largely inherited from Conlon's talk at a conference by the IIAS (Israel institute for advanced studies) in April 2018. He just didn't go as far.}. 

\begin{lem}
	\label{lem:double}For $H$ triplet structure
	if $e(g,\tau)=e(g',\tau')$ where $g\neq g'$ then
	\[g'=\tau g\] 
	\[
	\tau' = \tau^{-1}
	\](by abusing notation)
	\begin{rem*}
		We implicitly say that $\tau$ is commutative type.\\
		That is for $\tau=\{t_1,t_2\}$ \[t_2=t_1\] and so \[\tau'=\{t_2^{-1}, t_1^{-1} \}\] 
		and 
		\[g'=t_{2}t_{1}g=t_{1}t_{2}g\]
	\end{rem*}
\end{lem}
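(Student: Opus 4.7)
The plan is to unravel the definition $e(g,\tau)=\{\tau_1 g,\tau_2 g\}$, reduce the hypothesis to the algebraic identity appearing in Condition \textbf{B}, and then invoke \textbf{B} to pin down $\tau'$ and $g'$.

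First I would expand the hypothesis as an equality of unordered pairs: $\{\tau_1 g,\tau_2 g\}=\{\tau'_1 g',\tau'_2 g'\}$. Since this is a $2$-element set equality, there are two possible matchings; by Condition \textbf{A} the orderings of $\tau'$ in $\mathcal{T}_o$ are interchangeable, so without loss of generality I may assume $\tau_1 g=\tau'_1 g'$ and $\tau_2 g=\tau'_2 g'$. Solving each equation for $g'$ and equating gives $(\tau'_1)^{-1}\tau_1=(\tau'_2)^{-1}\tau_2$, or equivalently
\[
\tau_1\tau_2^{-1}=\tau'_1(\tau'_2)^{-1}.
\]

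Next I apply Condition \textbf{B} directly to the ordered tuples $t=(\tau_1,\tau_2)$ and $t'=(\tau'_1,\tau'_2)$ in $\mathcal{T}_o$. The displayed identity together with \textbf{B} forces either $t=t'$ — which would immediately give $g=g'$, contradicting the assumption $g\neq g'$ — or else $t'=t^{-1}$, i.e.\ $\tau'_1=\tau_2^{-1}$ and $\tau'_2=\tau_1^{-1}$, and moreover $\tau_1$ and $\tau_2$ commute. As unordered pairs this reads $\tau'=\{\tau_1^{-1},\tau_2^{-1}\}=\tau^{-1}$, giving the second assertion.

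Finally, I substitute $\tau'_1=\tau_2^{-1}$ back into $g'=(\tau'_1)^{-1}\tau_1 g$ to obtain $g'=\tau_2\tau_1 g$, and by the commutation of $\tau_1,\tau_2$ established in the previous step this equals $\tau_1\tau_2 g$, which under the abuse of notation is precisely $\tau g$. The only step that requires care is the ordered-versus-unordered bookkeeping between $\mathcal{T}$ and $\mathcal{T}_o$; once one commits to an ordering using Condition \textbf{A}, Condition \textbf{B} does all the real work and the remark about commutativity of $\tau_1,\tau_2$ drops out for free.
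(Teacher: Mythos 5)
Your proposal is correct and follows essentially the same route as the paper's proof: pass to ordered types, reduce the pair equality (after a WLOG matching) to the identity $\tau_1\tau_2^{-1}=\tau'_1(\tau'_2)^{-1}$, dispose of the $t=t'$ case, and invoke Condition \textbf{B} to obtain $t'=t^{-1}$, the commutation of $\tau_1,\tau_2$, and $g'=\tau g$. The only difference is cosmetic: you make the unordered-versus-ordered bookkeeping and the appeal to Condition \textbf{A} explicit, which the paper leaves implicit.
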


\begin{proof}
	Let $t$, $t'$ be the corresponding ordered types.

	That without loss of generalization corresponds to $t_{1}g=t'_{1}g'$ and $t_{2}g=t'_{2}g'$. Of course, if $t=t'$ we get a contradiction.
	\par Then \[t_{1}t_{2}^{-1}=(t'_{1})(t'_{2})^{-1}\]
	and by condition \hyperref[itm:itemB]{\textbf{B}}, \[t'_2 = (t_1)^{-1}\] \[t'_1=(t_2)^{-1}\]

	we get that $t_{2}t_{1}=t_{1}t_{2}$.
	By
	\[g'=(t'_{2})^{-1}t_{2}g\]
	we get that $g'=\tau g$ and $\tau'=\tau^{-1}$.

\end{proof}
\begin{lem}
 \label{lem:Every-edge-is}Every edge is in exactly two centers (see definition \ref{ndef} )
 \end{lem}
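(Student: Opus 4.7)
The plan is to prove the two inequalities separately: at most two centers, and at least two centers. Lemma \ref{lem:double} will do almost all of the work for the upper bound, while the lower bound requires exhibiting a concrete second center, and this is where condition \textbf{E} (commutativity of each type) and condition \textbf{A} (symmetry of $\mathcal{T}_o$) enter the picture.

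For the upper bound, I would fix an edge $w \in E^1$ and any one of its centers, say $g$, with $w = e(g,\tau)$ for some $\tau \in \mathcal{T}$. Suppose that $g'$ and $g''$ are both centers of $w$ different from $g$, with $w = e(g',\tau') = e(g'',\tau'')$. Applying Lemma \ref{lem:double} twice to the pairs $(g,g')$ and $(g,g'')$ gives $g' = \tau g = g''$. Hence any edge has at most one center other than $g$, i.e.\ at most two centers total.

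For the lower bound, I would show that $g$ is never the unique center. Write $\tau = \{t_1,t_2\}$; by condition \textbf{E} we have $t_1 t_2 = t_2 t_1$, so the product $\tau \cdot g := t_1 t_2 g = t_2 t_1 g$ is well defined. Set $g' := \tau \cdot g$. By condition \textbf{A}, $\tau^{-1} = \{t_2^{-1}, t_1^{-1}\} \in \mathcal{T}$. A direct computation then shows
\[
e(g', \tau^{-1}) = \{t_2^{-1} g', t_1^{-1} g'\} = \{t_2^{-1} t_1 t_2 g,\ t_1^{-1} t_2 t_1 g\} = \{t_1 g,\ t_2 g\} = e(g,\tau),
\]
so $g'$ is a second center, provided $g' \neq g$. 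Equality $g' = g$ would force $t_1 t_2 = e$, i.e.\ $t_2 = t_1^{-1}$, which is ruled out by condition \textbf{0}.

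Combining the two bounds gives exactly two centers. I expect the only mild obstacle to be bookkeeping around the unordered vs.\ ordered type notation (making sure $\tau^{-1}$ really belongs to $\mathcal{T}$, and that $\tau \cdot g$ is independent of the order in which we multiply $t_1,t_2$); both are handled by conditions \textbf{A} and \textbf{E} respectively. No further tools are needed beyond the already-proved Lemma \ref{lem:double}.
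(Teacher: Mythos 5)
Your proof is correct and follows essentially the same route as the paper: the explicit second center $\tau g$ with type $\tau^{-1}$ (via conditions \textbf{A} and \textbf{E}) for the lower bound, and Lemma \ref{lem:double} for the upper bound. Your extra check that $g' \neq g$ via condition \textbf{0} is a small point the paper leaves implicit, but it does not change the argument.
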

%

\begin{proof}
	First $|c(e)|\ge1$ by the definition of $E^{1}$.
	But for $t=(a,b)$ by condition \hyperref[itemE]{\textbf{E}}, we can see that
	\[e(g,t)=e(tg,t^{-1})\]

	Since $ag= b^{-1}abg$ and $bg=a^{-1}abg$.
	\medskip  \medskip \newline
	Suppose $|c(e)|>2$,
	\[e(g,\tau)=e(g',\tau')\]
	If exists another $g'',\tau'$ s.t. $e(g'',\tau')=e(g,\tau)$ then by \lemref{double}
		\[g''=\tau g=g'\]

	%
	%
	%

\end{proof}
\begin{defn}
	Let $G_{\text{walk}}$ be the auxiliary graph of the random walk on edges.
	That is \[
		e\ \sim e' \iff \exists \tilde{t}\text{ s.t. }\ e,e'\ \subset \tilde{t}
	\]where $ A_\text{walk} $ is the adjacency matrix
\end{defn}

\begin{lem}
	\label{lem:InGwalk} In $G_{walk}$ on a commutative triplet structure

\begin{enumerate}
	\item $e\ \sim\ e'$ iff exists $c$ s.t. $e=e(c,\tau)$ $e'=e(c,\tau')$ and
	 $\tau\sim^{L}\tau'$
	\item The walk is $4\tilde{d}$ regular
\end{enumerate}
\end{lem}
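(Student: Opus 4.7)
The plan is to unfold the definitions in both directions for part (1), and then carefully account for multiplicities in part (2).

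For part (1), the forward direction rests on the fact that every triple of $H$ has the form $\tilde{s}c$ for a unique pair $(\tilde{s}, c) \in \mathcal{S} \times G$ (the bijectivity of $St$ given by condition \hyperref[itm:itemB]{\textbf{B}}). If $e, e' \subset \tilde{t} = \tilde{s}c$, then $e = e(c,\tau)$ and $e' = e(c,\tau')$ with $\tau, \tau'$ both 2-subsets of $\tilde{s}$, which is precisely the condition $\tau \sim^L \tau'$ from Definition \ref{graphl}. The reverse direction just constructs the witnessing triple: if $\tilde{s}$ witnesses $\tau \sim^L \tau'$ in $L$, then $\tilde{s}c \in T(H)$ contains both $e(c,\tau)$ and $e(c,\tau')$.

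For part (2), the argument proceeds in three steps. First, by \lemref{Every-edge-is}, each edge $e \in E^1$ lies in exactly two centers, call them $g_1$ and $g_2$. Second, by condition \hyperref[itemC]{\textbf{C}}, for each $g \in c(e)$ there are exactly $\tilde{d}$ triples of the form $\tilde{s}g \supset e$ (one for each $\tilde{s} \in \mathcal{S}$ containing the type of $e$ at $g$), and each such triple contains two 2-subsets different from $e$, contributing $2\tilde{d}$ candidate neighbors per center, for a total of $4\tilde{d}$ counted with multiplicity. Third, to rule out overcounting, observe that any two distinct 2-subsets of a 3-set together cover it, so any pair $(e, e')$ of adjacent edges lies in a unique triple $\tilde{t} = e \cup e'$. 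Since that triple has a single center (uniqueness of $St$ from condition \hyperref[itm:itemB]{\textbf{B}}), the neighbor $e'$ is produced from only one of $g_1, g_2$, and within a single center distinct triples through $e$ necessarily contribute disjoint pairs of other 2-subsets. Hence the $4\tilde{d}$ neighbors are pairwise distinct, giving the claimed regularity.

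The main potential pitfall is verifying the non-overlap between the contributions of the two centers of $e$, but the combinatorial observation that a triple is determined by any two of its 2-subsets, combined with uniqueness of representation, handles this cleanly, so no serious difficulty is expected.
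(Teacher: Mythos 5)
Your proof is correct and follows essentially the same route as the paper: part (1) is the same unfolding of the definitions of $T(H)$, $e(c,\tau)$ and the graph $L$, and part (2) is the same count of $2\tilde{d}$ neighbours per center doubled over the two centers supplied by \lemref{Every-edge-is}. The only (cosmetic) difference is that you rule out overcounting via the uniqueness of the triple $e\cup e'$ together with the injectivity of $St$, whereas the paper instead invokes \lemref{double} to show that two adjacent edges have a unique common center; both arguments ultimately rest on condition \hyperref[itm:itemB]{\textbf{B}}.
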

\begin{proof}\mbox{}\\*
	\begin{enumerate}
		\item Let's check when $e$ is contained in a
		 certain triplet $\tilde{t}$.

		 The triplet $\tilde{t}$ is $\tilde{s}g$ where $\tilde{s}\in\mathcal{S}$.
		 So, any $2$-subset of it is guaranteed to be of the form $\{ s_{a}g,s_{b}g \}$
		. That means that $g\in c(e)$ is essential condition.

		 If $e\sim e'$, we must have that $c=c(e)\cap c(e')$
   exists, so we define $e'=e_{o}(c,t')$ and
   $e=e_{o}(c,t)$. \\

		 There can't be 2 such centers in the intersection.
		 Suppose they both belong to centers $c,c'$, then $c'=tc$ from \lemref{double}
		 and similarly $c'=t'c$. 

   We define $\tau=\{t_1,t_2\}$ and similarly for $\tau'$. 

		 \begin{center}
			 $\mathcal{ E }=\{\tau_{1}c,\tau_{2}c\}$ $\mathcal{ E }'=\{\tau'_{1}c,\tau'_{2}c\}$ $\tilde{t}=\{sc,s'c,s''c\}$
			
			 \end{center}

		 \par We can see that the condition $\{\tau_{1},\tau_{2}\},\{\tau'_{1},\tau'_{2}\}\subset\{s,s',s''\}$
		 is equivalent to the condition $\mathcal{ E },\mathcal{ E }'\subset\tilde{t}$.
		\item For every center $c$, we have $\mathcal{ E }=e(c,\tau)$ and every $\tau'$
		 s.t. $\tau'\sim^{L}\tau'$ induces a distinct $\mathcal{ E }'$. So, $d(\tau)=2\tilde{d}$
		 and $L$ is regular. And we have $d(\mathcal{ E })=|c(\mathcal{ E })|\tilde{d}$, where $|c(\mathcal{ E })|=2$ by \lemref{double}.
	\end{enumerate}
\end{proof}

\subsection{Replacement graph properties}
\label{repgraph}
\begin{defn}

    \label{def:We-define-graph}Given a commutative triplet structure $H=Cts(G,\mathcal{S})$,

	We define $G_{rep}$ that stands for a replacement product graph\footnote{The following are standard definitions. Some of them were taken from this excellent lecture about zig-zag product \cite{lect02} }.
	
	\[G_{rep}:=G_{cay}\raisebox{.5pt}{\textcircled{\raisebox{-.9pt}{r}}}L\]
	And more specifically, 	
	in $G_{rep}$, the vertex set is \[V_{rep}=G\times\mathcal{T}\]
	we define for \footnote{Notice that we need that the generators will commute here. The action is defined as in subsection  \ref{subsec:Structures} } $v\in G$, \[\phi_{v}:\mathcal{T}\rightarrow G\]
	\[
		\phi_{v}(\tau)=\tau v
	\]

	We have

	\[v=abu\text{ }\iff\text{ }(u,\{a,b\})\sim(v,\{a^{-1},b^{-1}\}) \]

	\medskip  \medskip and 
	 \[
	 E^{{\color{red}red}}=\{(v,\tau)\sim(v,\tau')\text{ if }\tau \sim \tau'\text{ on }L\}
	 \] 
	\[
		E^{{\color{blue}blue}}=\{(v,\tau)\sim(u,\tau')\ if\ u\sim v\ and\ \text{\ensuremath{\phi_{v}(\tau)=u\ ,\phi_{u}(\tau')=v\}}}
	\]


	\[
		E(G_{rep})=E^{{\color{blue}blue}}\cup E^{{\color{red}red}}
	\]

	$E^{{\color{blue}blue}}$ will be defined by the adjacency matrix $P_{{\color{blue}{\normalcolor }blue}}$ (or simply $P_B$)

	$E^{{\color{red}red}}$ will be defined by the adjacency matrix $P_{{\color{red}red}}$ (or $P_R$)

	$P_{R}|_{g}$ signifies restriction of $P_{R}$ to elements\footnote{That is $\{g,\tau \ \mid\ \tau\in \mathcal{T}\}$} $g,\_$ 
	. This would be of course exactly an instance of the graph $L$.
	So, $P_R$ is $\tilde{d}$-regular.

\end{defn}

\begin{rem*}
	Notice that
	the edges defined by $P_{R}P_{B}P_{R}$ are the edges of the zig-zag product(see \cite{hoory2006expander}).

    For an illustrative example for $G_{ cay }$, see corollary \ref{conlon}.
\end{rem*}

\subsubsection{Operator $T$}
\label{mainpart}

We define $T$ (will act over V($G_{rep}$)) by

\[
	T=\frac{1}{2}P_{R}+\frac{1}{2}P_{R}P_{B}
\]
(the matrices are normalized) 

We define an inverse function to $e:G\times\mathcal{T}\rightarrow E^{1}$ that is
\[\gamma:E^{1}\rightarrow P(G\times\mathcal{T})\]
Notice that $\gamma$ is the labeling function that gives the vertices
in $G_{walk}$ the corresponding name in $G_{rep}$. \newline

The random walk described by $T$ over $G_{rep}$ and the random walk over $G_{walk}$ are essentially the same,
as demonstrated in the following section:

\subsubsection{The random walk in Conlon's case}

\label{sec:randcon}
We describe the random walk in Conlon's case, in terms of types and centers. Suppose we start
at $e(g,\{s_{1},s_{2}\})$. In each step, we pick first a center
our edge is contained in. That is, we choose $k\in\{1,2\}$. So we will be 
either in center $g$ or in center $s_{1}s_{2}g$ in probability $1/2$. 
Now we choose another $s\in S$, where $s\neq s_{1},s_{2}$.

Then we look at the triple that contains the edge $\{s_{1}g,s_{2}g,sg\}$
for $s\in S$. The type of the new edge is $\{s,s_{j}\}$ for $j\in\{1,2\}$.

So the choices are exactly $2d$, where $d$ is the degree of $L$.
The graph $L$ is exactly $J(S,2)$ defined earlier, so $d$ is $2(S-2)$. 
There are $4(S-2)$ choices all in all.

Looking it as a random walk over $G_{rep}$, selecting a center corresponds to selecting either the first operand or second operand in $T$. 
And selecting the new edge type corresponds to an action by $P_R$. 

\subsubsection{Formal relation}
\label{formal}
In this section we prove that $T$ is a lift of $G_{walk}$. 

To prove it, we first prove there is a graph homomorphism between the graphs, and then we prove that the neighborhood are transfered bijectively.

\begin{lem}
 \label{lem:graphhomo}
 There is graph homomorphism between the graph induced by $T$ over $G_{rep}$ and $G_{walk}$. That is
 the function \[e:V(G_{rep})\rightarrow V(G_{walk})\] (defined earlier) such that if\footnote{There is a directed edge from $g,\tau$ to $g',\tau'$ or the other way around}
 \[g,\tau\rightarrow^{T}g',\tau'\] then \[e(g,\tau)\sim^{G_{walk}}e(g',\tau')\]
 This mapping is 2-1. 
\end{lem}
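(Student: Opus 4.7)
The plan is to verify both claims of the lemma independently: first that the map $e$ is 2-to-1 on vertex sets, and then that it carries each edge of the $T$-graph to a (loop or) edge of $G_{walk}$. The key tools will be \lemref{Every-edge-is} (every edge sits in exactly two centers) and part (1) of \lemref{InGwalk} (characterizing adjacency in $G_{walk}$ via a common center and an $L$-adjacency of types).

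For the 2-to-1 statement, I would fix an edge $\mathcal{E} \in E^1(H)$ and count its preimages in $V(G_{rep}) = G \times \mathcal{T}$. By \lemref{Every-edge-is}, $\mathcal{E}$ lies in exactly two centers $g_1,g_2 \in G$. For each fixed center $g_i$, the equation $e(g_i,\tau) = \mathcal{E}$ forces $\{\tau_1 g_i, \tau_2 g_i\} = \mathcal{E}$, and since left multiplication by $g_i$ is a bijection on $G$ this determines $\tau$ uniquely as a set. Hence $|e^{-1}(\mathcal{E})| = 2$.

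For the homomorphism property, I would case-split according to the two summands of $T = \tfrac{1}{2}P_R + \tfrac{1}{2} P_R P_B$. In the red case, an edge $(g,\tau) \sim_R (g,\tau')$ in $G_{rep}$ has $\tau \sim^L \tau'$ in a common center $g$, which by \lemref{InGwalk}(1) immediately yields $e(g,\tau) \sim^{G_{walk}} e(g,\tau')$. In the $P_R P_B$ case, a two-step walk goes $(g,\tau) \sim_B (g',\tau^{-1}) \sim_R (g',\tau'')$ where $g' = \tau g$ (in the abused sense) and $\tau'' \sim^L \tau^{-1}$. The crucial observation, which follows from condition \hyperref[itemE]{\textbf{E}} exactly as used in the proof of \lemref{Every-edge-is}, is that the blue edge preserves the $e$-image: $e(g,\tau) = e(\tau g,\tau^{-1}) = e(g',\tau^{-1})$. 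So the problem reduces to the red case inside the center $g'$, and \lemref{InGwalk}(1) again gives $e(g',\tau^{-1}) \sim^{G_{walk}} e(g',\tau'')$, hence $e(g,\tau) \sim^{G_{walk}} e(g',\tau'')$.

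There is essentially no hard obstacle: the content of the lemma is a bookkeeping consequence of previously established facts. The one subtlety worth spelling out carefully is the color-blind interpretation of edges in the graph ``induced by $T$'' — namely, that one should read the $P_R P_B$ summand as the set of length-two walks (blue then red), and verify that the endpoints of each such walk are $G_{walk}$-adjacent (rather than requiring direct $G_{walk}$-adjacency of the intermediate vertex). Once that is clarified, the argument is a one-line application of the two lemmas in each case.
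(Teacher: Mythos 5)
Your proof is correct and follows essentially the same route as the paper's: the same case split on the two summands of $T=\frac{1}{2}P_R+\frac{1}{2}P_RP_B$, invoking \lemref{InGwalk}(1) for the red case and the identity $e(g,\tau)=e(\tau g,\tau^{-1})$ (from condition \textbf{E}, as in \lemref{Every-edge-is}) to reduce the blue-then-red case to the red case in the new center. You additionally spell out the 2-to-1 count explicitly via \lemref{Every-edge-is}, which the paper's proof leaves implicit; that is a welcome clarification rather than a deviation.
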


\begin{proof}

 Either\footnote{Because $P_B$ impose a condition on the center} \[e_{g',\tau'} \frac{1}{2} P_R e_{g,\tau}= \frac{1}{4\tilde{d}}\] or \[e_{g',\tau'} \frac{1}{2} P_R P_B e_{g,\tau}= \frac{1}{4\tilde{d}}\] 

In the first case, we have that $g=g'$ and $\tau \sim^L \tau' $. So we are done by lemma \ref{lem:InGwalk} for $c=g$.

In the second case, we have that $\tau g =g'$ and $ \tau^{-1} \sim^L \tau' $. So, by the same lemma, $e(g,\tau) \sim e(\tau g,\tau^{-1})$.
But as mentioned, $e(\tau g,\tau^{-1})=e(g,\tau)$. 

\end{proof}
We denote $\Gamma_{ \tilde{G}(v) }$ for the set of neighbors of $v$ in graph $\tilde{G}$.
\begin{lem}
 For every vertex $v\in V(G_{rep})$, the mapping
 \begin{align*}
     e : \Gamma_{T}(v) &\longrightarrow \Gamma_{G_{walk}}(e(v)) \\
 \end{align*}
 is bijective
\end{lem}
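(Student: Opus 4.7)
The plan is to prove bijectivity by exhibiting injectivity and showing that both sides have cardinality $4\tilde{d}$. The map is already known to land inside $\Gamma_{G_{walk}}(e(v))$ by the previous Lemma \ref{lem:graphhomo}, so only injectivity and the count remain.

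First I would enumerate $\Gamma_T(v)$ for $v = (g, \tau)$. Since $T = \tfrac{1}{2}P_R + \tfrac{1}{2}P_R P_B$, the neighbors split into two families. The $P_R$-neighbors of $v$ are exactly the $2\tilde{d}$ vertices of the form $(g, \tau')$ with $\tau' \sim^L \tau$. For the $P_R P_B$-neighbors, $P_B$ sends $v$ to the unique vertex $(\tau g, \tau^{-1})$, after which $P_R$ produces $(\tau g, \tau'')$ with $\tau'' \sim^L \tau^{-1}$, another $2\tilde{d}$ vertices. The two families are disjoint since they live in distinct centers $g$ and $\tau g$; here condition \hyperref[itemZ]{\textbf{0}} ensures $\tau g \neq g$. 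Thus $|\Gamma_T(v)| = 4\tilde{d}$. On the other side, Lemma \ref{lem:Every-edge-is} tells us $e(v)$ lies in exactly the two centers $g$ and $\tau g$, with corresponding types $\tau$ and $\tau^{-1}$; combining Lemma \ref{lem:InGwalk}(1) with the $2\tilde{d}$-regularity of $L$, the neighbors of $e(v)$ in $G_{walk}$ partition into $2\tilde{d}$ edges sharing center $g$ (of the form $e(g,\tau')$ with $\tau' \sim^L \tau$) and $2\tilde{d}$ sharing center $\tau g$ (of the form $e(\tau g, \tau'')$ with $\tau'' \sim^L \tau^{-1}$), for a total of $4\tilde{d}$.

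The map $e$ respects this partition: $P_R$-neighbors fall into the center-$g$ block, $P_R P_B$-neighbors into the center-$\tau g$ block. Within each block, injectivity is immediate from Lemma \ref{lem:double} applied at a fixed center (distinct types at the same center yield distinct edges). The delicate step, which I expect to be the main obstacle, is ruling out collisions across the two blocks: if $e(g,\tau') = e(\tau g, \tau'')$ with $g \neq \tau g$, then Lemma \ref{lem:double} forces $\tau' = \tau$, but this contradicts $\tau' \sim^L \tau$ because $L$ has no self-loops (the third vertex of the triplet realizing the adjacency must be distinct from both elements of $\tau$, which is already reflected in the count $2\tilde{d}$ for the regularity of $L$). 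With injectivity established and both sides having cardinality $4\tilde{d}$, the map is a bijection.
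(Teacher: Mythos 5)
Your proof is correct in substance but takes the dual route to the paper's: both arguments reduce bijectivity to the degree count $4\tilde{d}$ on each side, but the paper then shows the map is \emph{onto} (given $\mathcal{E}\sim e(v)$, it writes $\mathcal{E}=e(g,\tau')$ via Lemma \ref{lem:InGwalk} and exhibits an explicit $T$-neighbor of $v$ mapping to it), whereas you show it is \emph{injective}. Surjectivity is the cheaper half, since it avoids any collision analysis; your injectivity route forces you to confront the cross-block collisions, and that is the one place your argument is too quick. If $e(g,\tau')=e(\tau g,\tau'')$ with $g\neq\tau g$, Lemma \ref{lem:double} does \emph{not} directly give $\tau'=\tau$: it gives $\tau g=\tau' g$, i.e.\ equality of the group elements $\tau_1\tau_2=\tau'_1\tau'_2$, together with $\tau''=(\tau')^{-1}$. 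To upgrade equality of products to equality of types you need one further observation: since $\tau'\sim^L\tau$, the two types are distinct $2$-subsets of a common triple and hence share an element, say $\tau=\{a,b\}$ and $\tau'=\{a,c\}$ with $b\neq c$; then $ab=ac$ cancels to $b=c$, a contradiction. With that line inserted, your appeal to the absence of self-loops in $L$ closes the case, and the rest of the count is sound — including your use of condition \textbf{0} to guarantee $\tau g\neq g$ so that the two blocks of $2\tilde{d}$ neighbors are disjoint, a point the paper leaves implicit in its own $4\tilde{d}$-regularity claim.
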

\begin{proof}
 The mapping is well-defined because it is a graph homomorphism. 
 It is enough to prove that the mapping is onto(because the sets are equal in size, every vertex in both graphs has $4\tilde{d}$ neighbors). 
 Suppose $\mathcal{E} \sim e(v)$ . 
 By lemma \ref{lem:InGwalk} we can assume that $e(v)=e(g,\tau)$ and $\mathcal{E}=e(g,\tau')$. 
 Therefore, $v$ is either $g,\tau$ or $\tau g,\tau^{-1}$.
 In any case, it is easy to see that $g,\tau' \in \Gamma_{G_{rep}}(v)$

\end{proof} 

The last lemma assured that the graph identifiable with $T$ is actually a \textbf{lift} of the graph $G_{walk}$. The definition of lift is the following (taken from \cite{hoory2006expander}):

\begin{defn}
    Let \(G\) and \(H\) be two graphs. We say that a function \(f: V(H) \rightarrow\)
    \(V(G)\) is a covering map if for every \(v \in V(H), f\) maps the neighbor set \(\Gamma_{H}(v)\) of \(v\) one-to-one and onto \(\Gamma_{G}(f(v))\). If there exists a covering function from $H$ to $G$,
we say that $H$ is a lift of $G$ or that $G$ is a quotient of $H$.
\end{defn}
It is well known that a lift has all the eigenvalues of the quotient(for example, see \cite{bilu2006lifts}).
We get as a conclusion that $T$ has all the eigenvalues of $G_{walk}$. So, we can deduce:
\begin{cor}
    \label{corlift}
    $ \lambda(G_{walk}) \le \lambda( T )$ 
\end{cor}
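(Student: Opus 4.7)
The plan is to read off the corollary as an immediate consequence of the two preceding lemmas combined with a standard spectral fact about graph lifts. Specifically, the lemma establishing a graph homomorphism shows that whenever $T$ puts nonzero weight on a pair $(g,\tau) \to (g',\tau')$, the images $e(g,\tau)$ and $e(g',\tau')$ are adjacent in $G_{walk}$; the subsequent lemma shows that the induced map $e : \Gamma_T(v) \to \Gamma_{G_{walk}}(e(v))$ is a bijection for every $v \in V(G_{rep})$. Together these two statements say exactly that $e$ is a covering map, so the graph underlying $T$ is a lift of $G_{walk}$ in the sense of the definition recalled just before the corollary.

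The next step is to invoke the standard fact (cited to \cite{bilu2006lifts}) that every eigenvalue of a quotient graph is also an eigenvalue of any of its lifts. The proof of this fact is a one-line pullback argument: if $A f = \mu f$ on $G_{walk}$, then $\tilde f := f \circ e$ satisfies
\[
  (T \tilde f)(v) \;=\; \sum_{u \in \Gamma_T(v)} T(v,u)\,f(e(u)) \;=\; \sum_{w \in \Gamma_{G_{walk}}(e(v))} A_{\text{walk}}(e(v),w)\, f(w) \;=\; \mu\,\tilde f(v),
\]
using that $e$ sends $\Gamma_T(v)$ bijectively onto $\Gamma_{G_{walk}}(e(v))$ and that both $T$ and $A_{\text{walk}}$ put uniform weight $1/(4\tilde d)$ on each neighbor. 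Hence every eigenvalue of the (normalized) adjacency operator of $G_{walk}$ occurs as an eigenvalue of $T$. In particular, the trivial eigenvalue $1$ pulls back to $1$ on the all-ones vector, and any nontrivial eigenvalue $\mu$ with $|\mu| = \lambda(G_{walk})$ appears in the spectrum of $T$, giving the claimed inequality $\lambda(G_{walk}) \le \lambda(T)$.

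The only real subtlety, which I would address in one sentence, is that $T = \tfrac12 P_R + \tfrac12 P_R P_B$ is not symmetric in general, so $\lambda(T)$ has to be interpreted as the second-largest eigenvalue of $T$ viewed as a stochastic operator. This is exactly the setting in which the pullback argument above still works verbatim, because it only uses row sums and the bijection of neighborhoods, not symmetry. Given this, the corollary is a formality and there is no substantive obstacle; the real content is packed into the two preceding lemmas that verified the covering property.
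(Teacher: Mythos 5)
Your proof follows exactly the paper's route: the two preceding lemmas establish that $e$ is a covering map, so the graph of $T$ is a lift of $G_{walk}$, and the standard fact that a lift inherits all eigenvalues of its quotient gives the inequality. The only difference is that you spell out the pullback argument $\tilde f := f \circ e$ and flag the non-symmetry of $T$ explicitly, both of which the paper leaves implicit by citing \cite{bilu2006lifts}; this is a correct and slightly more careful rendering of the same argument.
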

 
\subsubsection{Bounding the convergence rate} 

Now we want to get a bound on the convergence rate of the walk on $G_{walk}$ in terms of the walk on $T$. And then to get a bound on the random walk on $T$. \\

\noindent We define $\pi$ to be the uniform distribution on $V(G_{walk})$.\\
We define $\pi'$ to be the uniform distribution on all vertices of $V(G_{rep})$.\\

\noindent Notice that $\pi'$ on any vertex is half the value of $\pi$. 
Let $V_+$ be the space $x \bot \pi`$ where $x\in \mathbb{R}^{V(G_{rep})}$. We prove here that $T_+$ is well-defined\footnote{$T$ restricted to $V_+$}.

\begin{fact}
\label{fact: FactTPI}
The operator T satisfies the following:
\begin{enumerate}
 \item $T\pi'=\pi'$
 \item $T(V_+)\subset V_+$
\end{enumerate}
\end{fact}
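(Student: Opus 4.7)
The plan is to verify both claims directly from the structure of the two building blocks $P_R$ and $P_B$ and then combine them.

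First I would observe that both $P_R$ and $P_B$ are normalized adjacency matrices of undirected regular graphs on the vertex set $V(G_{rep})$, so each is symmetric and doubly stochastic. For $P_R$: by the definition of the replacement product, $P_R$ is the disjoint union of copies of $L$ (one per center $g \in G$), so by condition \textbf{C} every vertex has exactly $\tilde{d}$ red neighbors and $P_R$ is $\tilde{d}$-regular. For $P_B$: the incidence condition $\phi_v(\tau)=u$ and $\phi_u(\tau')=v$ together with commutativity (condition \textbf{E}) and \lemref{double} forces $\tau'=\tau^{-1}$ and $u=\tau v$; hence each vertex $(v,\tau)$ has the unique blue neighbor $(\tau v,\tau^{-1})$, so $P_B$ is a perfect matching, in particular doubly stochastic and symmetric (it is an involution).

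For part (1), since any doubly stochastic matrix fixes the uniform distribution, $P_R \pi' = \pi'$ and $P_B \pi' = \pi'$. Then
\[
T\pi' \;=\; \tfrac{1}{2} P_R \pi' + \tfrac{1}{2} P_R P_B \pi' \;=\; \tfrac{1}{2}\pi' + \tfrac{1}{2}\pi' \;=\; \pi'.
\]

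For part (2), the cleanest route is duality: since $P_R$ and $P_B$ are symmetric,
\[
T^* \;=\; \tfrac{1}{2} P_R^* + \tfrac{1}{2}(P_R P_B)^* \;=\; \tfrac{1}{2} P_R + \tfrac{1}{2} P_B P_R,
\]
and the same argument as above shows $T^*\pi'=\pi'$. Hence for any $x \in V_+$,
\[
\langle T x, \pi' \rangle \;=\; \langle x, T^* \pi' \rangle \;=\; \langle x, \pi' \rangle \;=\; 0,
\]
so $Tx \in V_+$.

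The only subtlety — and thus the main thing to get right — is the assertion that $P_B$ is well-defined as a matching, i.e.\ that every vertex of $G_{rep}$ has exactly one blue neighbor. This is where commutativity (condition \textbf{E}) and the uniqueness lemma \lemref{double} are used; once one observes that the blue partner of $(v,\tau)$ is forced to be $(\tau v,\tau^{-1})$, the rest of the fact is immediate linear algebra.
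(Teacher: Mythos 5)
Your proof is correct and follows essentially the same route as the paper: both arguments reduce to the facts that $P_R$ and $P_B$ are symmetric and fix $\pi'$, and your dual computation $\langle Tx,\pi'\rangle=\langle x,T^*\pi'\rangle$ is just a cleaner packaging of the paper's term-by-term manipulation $\langle \pi',Ty\rangle=\langle P_R\pi',y\rangle+\langle P_R\pi',P_By\rangle=\cdots=0$. Your explicit justification that $P_B$ is a well-defined perfect matching (via commutativity and the uniqueness of the second center) is a point the paper leaves implicit, and is worth stating.
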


\begin{proof}
\mbox{}\\*
 \begin{enumerate}
 \medskip \medskip 
	\item We have \[
	T\pi'=\frac{1}{2}P_{R}\pi'+\frac{1}{2}P_{R}P_{B}\pi'=P_{R} \pi'=\pi'
 \] \\ 
 \par Since $P_{B}\pi'=\pi'$. That is because $\forall x,\tau$ \\
\[
(P_{B}\pi')( e_{ x,\tau } )=\pi' ( e_{ \tau x,\tau^{-1} } )=\pi'( e_{ x,\tau } )
\]

\item Suppose $y\bot \pi'$. Then 

\begin{align*}
 \langle \pi',Ty \rangle &= \langle \pi', P_R y + P_RP_By \rangle \\
	 &= \langle \pi', P_R y \rangle + \langle \pi', P_R P_By \rangle = \langle P_R\pi',y \rangle + \langle P_R \pi', P_By \rangle = \\
	 &= \langle \pi', y \rangle + \langle \pi', P_B y \rangle = \langle P_B \pi', y \rangle = 0 
\end{align*}

 \end{enumerate}\end{proof}

\begin{lem}
	\label{lem:In-case-there}
	$||T^2||_+ \le\frac{1}{2}+\frac{1}{2}\lambda(G_{cay}\raisebox{.5pt}{\textcircled{\raisebox{-.9pt}{z}}}L )$
\end{lem}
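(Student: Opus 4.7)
The plan is to expand $T^2$ using the definition $T = \tfrac{1}{2}P_R + \tfrac{1}{2}P_R P_B$, isolate the operator $P_R P_B P_R$ which is (by the remark following Definition \ref{def:We-define-graph}) the walk operator on the zig-zag product, and bound everything else by $1$. Concretely, I first compute
\[ T^2 = \tfrac{1}{4}(P_R + P_R P_B)(P_R + P_R P_B) = \tfrac{1}{4}\bigl(P_R^2 + P_R^2 P_B + P_R P_B P_R + P_R P_B P_R P_B\bigr). \]
The four terms group in pairs sharing a common right factor of $(I+P_B)$, giving the clean factorization
\[ T^2 = \tfrac{1}{4}(P_R^2 + P_R P_B P_R)(I + P_B) = \tfrac{1}{2}(P_R^2 + P_R P_B P_R)\cdot \tfrac{I+P_B}{2}. \]

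Next I would verify that each of the relevant factors preserves the invariant subspace $V_+$, so that submultiplicativity of the operator norm on $V_+$ applies cleanly. The identity $P_B \pi' = \pi'$ is already recorded inside the proof of Fact~\ref{fact: FactTPI}, while $P_R \pi' = \pi'$ holds because $P_R$ is block-diagonal across centers, acting inside each center as the normalized adjacency of the regular graph $L$; since both $P_R$ and $P_B$ are symmetric with respect to the uniform measure, their common $\pi'$-eigenspace has orthogonal complement $V_+$ invariant under each. Combining submultiplicativity with the triangle inequality gives
\[ \|T^2\|_+ \le \tfrac{1}{2}\bigl(\|P_R^2\|_+ + \|P_R P_B P_R\|_+\bigr) \cdot \bigl\|\tfrac{I+P_B}{2}\bigr\|_+. \]

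To close the argument I would bound each piece. The factor $\|P_R^2\|_+ \le \|P_R\|_+^2 \le 1$ because $P_R$ is a normalized symmetric stochastic matrix; the factor $\|(I+P_B)/2\|_+ \le 1$ because $P_B$ is symmetric with spectral radius at most $1$, so $(I+P_B)/2$ has eigenvalues in $[0,1]$. The middle factor $P_R P_B P_R$ is by construction the normalized adjacency matrix of the zig-zag product $G_{cay} \raisebox{.5pt}{\textcircled{\raisebox{-.9pt}{z}}} L$ built over $G_{rep}$, so $\|P_R P_B P_R\|_+ = \lambda(G_{cay} \raisebox{.5pt}{\textcircled{\raisebox{-.9pt}{z}}} L)$, and the pieces assemble into the claimed bound. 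The only real step of substance is the algebraic observation that the four terms of $T^2$ factor as $(P_R^2 + P_R P_B P_R)(I+P_B)$, so that the ``trivial'' contribution $P_R^2$ and the ``zig-zag'' contribution $P_R P_B P_R$ separate cleanly; once that factorization is in hand, the rest is routine normed algebra.
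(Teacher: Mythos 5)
Your proof is correct and follows essentially the same route as the paper's: both expand $T^2$ into the four terms $P_R^2+P_R^2P_B+P_RP_BP_R+P_RP_BP_RP_B$, identify $P_RP_BP_R$ with the zig-zag operator, and bound the remaining factors by $1$ on $V_+$; your explicit factorization $(P_R^2+P_RP_BP_R)(I+P_B)$ is just a tidier packaging of the paper's pairing of the terms. The only difference is presentational — you check $V_+$-invariance of the factors a bit more carefully than the paper's parenthetical remark — so no further comparison is needed.
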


\begin{proof}

	\[
		T=\frac{1}{2}P_{R}+\frac{1}{2}P_{R}P_{B}
	\]


	So, we have:

	\[
		T^{2}=\frac{1}{4}[P_{R}^{2}+P_{R}^{2}P_{B}+P_{R}P_{B}P_{R}+P_{R}P_{B}P_{R}P_{B}]
	\]
We will bound $||T^2||_+$ 

	\[
		||P_{R}^{2}+P_{R}^{2}P_{B}||_{+}\le2
	\]

	\[
		||P_{R}P_{B}P_{R}+P_{R}P_{B}P_{R}P_{B}||_{+}\le||P_{R}P_{B}P_{R}||_{+}+||P_{R}P_{B}P_{R}P_{B}||_{+}\le2||P_{R}P_{B}P_{R}||_{+}
	\]
	(Notice that $P_Bx \bot \pi'$ if $x\bot \pi'$)

	\par So we have that

	\[
	    ||T^{2}||_{+}\le\frac{1}{2}+\frac{1}{2} \lambda(G_{cay}\raisebox{.5pt}{\textcircled{\raisebox{-.9pt}{z}}L})  
	\]	
	That assures that we hare a rapid convergence — as the eigenvalues of $T_+^2$ are bounded away from 1.

\end{proof}

We have our main theorem: 

\mainthm

\begin{proof}
 Immediate from corollary \ref{corlift} and lemma \ref{lem:In-case-there} 
\end{proof}

\begin{cor}
  \label{corollary:rw}
  $\lambda(G_{walk})\le \sqrt{\frac{1}{2}+\frac{1}{2}f(\alpha,\beta)}$
 where $\alpha=\lambda(G_{cay})$, $\beta=\lambda(L)$
 and $f$ is the zig-zag function\footnote{originally defined in \cite[Theorem 3.2]{reingold2002entropy}} from \cref{equation:zig-zag}  ($f$ satisfies $f(a,b)\le a+b$, $f<1$ where
  $a,b<1$ ).
\end{cor}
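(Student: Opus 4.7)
The corollary follows from the main theorem by simply bounding the eigenvalue of the zig-zag product using the standard zig-zag analysis of Reingold--Vadhan--Wigderson. The plan is therefore very short.

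First, I would apply the main theorem (proved just above) to obtain
\[
\lambda(G_{walk})\le \sqrt{\tfrac{1}{2}+\tfrac{1}{2}\lambda\bigl(G_{cay}\raisebox{.5pt}{\textcircled{\raisebox{-.9pt}{z}}}L\bigr)}.
\]
Since the square root is monotonic in its argument, any upper bound on $\lambda(G_{cay}\raisebox{.5pt}{\textcircled{\raisebox{-.9pt}{z}}}L)$ translates directly into an upper bound on $\lambda(G_{walk})$.

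Second, I would verify that the two graphs $G_{cay}$ and $L$ satisfy the regularity matching required by the zig-zag product construction, so that the RVW theorem applies. The graph $G_{cay}=Cay(G,\mathcal{T})$ is $|\mathcal{T}|$-regular, and $|\mathcal{T}|$ is precisely the number of vertices of $L$; meanwhile $L$ is $2\tilde d$-regular by the remark following the definition of $L$ (using condition \textbf{C}). Thus the replacement product $G_{cay}\raisebox{.5pt}{\textcircled{\raisebox{-.9pt}{r}}}L$ is well defined, and the zig-zag product $G_{cay}\raisebox{.5pt}{\textcircled{\raisebox{-.9pt}{z}}}L$ is the corresponding $(2\tilde d)^2$-regular graph on $G\times \mathcal{T}$. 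The RVW bound then gives
\[
\lambda\bigl(G_{cay}\raisebox{.5pt}{\textcircled{\raisebox{-.9pt}{z}}}L\bigr)\le f(\alpha,\beta),
\]
with $\alpha=\lambda(G_{cay})$, $\beta=\lambda(L)$, and $f$ the zig-zag function (satisfying $f(a,b)\le a+b$, and $f(a,b)<1$ whenever $a,b<1$).

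Combining the two inequalities yields the desired bound. There is essentially no ``hard part'' here: the entire work has been done in establishing the main theorem, and the step from lemma \ref{lem:In-case-there} to the corollary is only a matter of substituting the standard zig-zag eigenvalue bound into the already-established inequality. The only thing one must be careful about is making sure that the parameters of $G_{cay}$ and $L$ are matched so that the zig-zag product is literally the one to which the RVW analysis applies, which is immediate from the definitions above.
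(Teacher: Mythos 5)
Your proposal is correct and follows essentially the same route as the paper: apply the main theorem, then invoke the Reingold--Vadhan--Wigderson bound $\lambda(G_{cay}\raisebox{.5pt}{\textcircled{\raisebox{-.9pt}{z}}}L)\le f(\alpha,\beta)$ and substitute. Your explicit check that the degree of $G_{cay}$ equals $|\mathcal{T}|=|V(L)|$, so that the zig-zag product is well defined, is a detail the paper leaves implicit but is a welcome addition.
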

\begin{proof}
We now wish to bound $\lambda(G_{cay}\raisebox{.5pt}{\textcircled{\raisebox{-.9pt}{z}}}L )$ to get a bound on $G_{walk}$. 
We can rely on known theorems about the zig-zag product.
We use here the following theorem by Reingoldn, Vadhan and Wigderson (originally \cite[Theorem 4.3]{reingold2002entropy}).
The theorem reads:
	\begin{thm} 
		\begin{math}
		\text {If } G_{1} \text { is an }\left(N_{1}, D_{1}, \lambda_{1}\right)\text{-graph and } G_{2} \text { is a }\left(D_{1}, D_{2}, \lambda_{2}\right)\text {-graph then }\end{math}
		
		$G_{1}\raisebox{.5pt}{\textcircled{\raisebox{-.9pt}{z}}}G_{2}$\begin{math}\text { is a }\left(N_{1} \cdot D_{1}, D_{2}^{2}, f\left(\lambda_{1}, \lambda_{2}\right)\right)-\text{graph, where } f\left(\lambda_{1}, \lambda_{2}\right) \leq\lambda_{1}+\lambda_{2} \text{ and } f\left(\lambda_{1}, \lambda_{2}\right)<1 \text{ when } \lambda_{1}, \lambda_{2}<1\end{math}.
	\end{thm}
$f$ is the function:
\begin{equation}
 \label{equation:zig-zag} 
f\left(\lambda_{1}, \lambda_{2}\right)=\frac{1}{2}\left(1-\lambda_{2}^{2}\right) \lambda_{1}+\frac{1}{2} \sqrt{\left(1-\lambda_{2}^{2}\right)^{2} \lambda_{1}^{2}+4 \lambda_{2}^{2}}
\end{equation}	 
	
We call it the "zig-zag function".

Some of its properties are studied there. It is better (lower) when $\lambda_1$ and $\lambda_2$ are worse. And less than 1 if $\lambda_1$ and $\lambda_2$ are less than 1. This assures the resulted graph is an expander when the original graphs are. 

    \end{proof}    
\begin{restatable}{cor}{comutativegood}
 \label{corollary:comgood}
Let $G$ be a group.
Let $\mathcal{S}\subset{G \choose 3}$ be a set of triples. \\
Suppose $H=Cts( G,\mathcal{S} )$ is a $k$-edge regular (where $k$ is bounded by $D$) commutative triplet structure (satisfies conditions \hyperref[itemZ]{\textbf{0}} - \hyperref[itemE]{\textbf{E}} ).
Suppose further that $Cay(G,{\mathcal{S}\choose{2}}) $ is $\epsilon$-expander.
Then, $2D$-random walk on $H$ converges rapidly with some rate $\alpha(D,\epsilon)<1$.
\end{restatable}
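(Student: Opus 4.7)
The plan is to invoke \cref{corollary:rw}, which gives
\[
\lambda(G_{walk}) \le \sqrt{\tfrac{1}{2} + \tfrac{1}{2} f(\alpha,\beta)}
\]
with $\alpha := \lambda(G_{cay})$ and $\beta := \lambda(L)$. Since the zig-zag function satisfies $f(a,b) < 1$ whenever $a,b < 1$, the right-hand side is strictly less than one as soon as both $\alpha$ and $\beta$ are bounded away from $1$. So the task reduces to producing quantitative bounds $\alpha \le 1-\epsilon$ and $\beta \le \beta_{\max}(D) < 1$, and then reading off the final constant.

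The bound on $\alpha$ is immediate: by hypothesis $G_{cay} = Cay(G,\binom{\mathcal{S}}{2})$ is $\epsilon$-expanding, so $\alpha \le 1-\epsilon$ directly from the definition of $\epsilon$-expansion.

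For $\beta$, I would rely on the fact that $L$ is connected by Condition \textbf{D} and $2k$-regular (with $k \le D$) by the edge-regularity, as recorded at the start of \cref{sub:basic_properties}. The key point is that $L$ is entirely determined by the combinatorial ``local template'' $\mathcal{S}$, which — under the standing boundedness hypothesis encoded by $D$ — lies in a finite list of possible shapes. Consequently $L$ is one of finitely many connected regular graphs of bounded size, so $\lambda(L) \le \beta_{\max}(D) < 1$ for some quantity $\beta_{\max}$ depending only on $D$; one could even extract an explicit constant from the Cheeger inequality applied to a graph on at most $O_D(1)$ vertices. Combining everything, we set $\alpha(D,\epsilon) := \sqrt{\tfrac{1}{2} + \tfrac{1}{2} f(1-\epsilon, \beta_{\max}(D))} < 1$ and conclude.

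The delicate step is the uniform bound on $\lambda(L)$: one really does need some finiteness of the local template, because merely bounding the degree of $L$ by $2D$ would still permit $L$ to be, say, a long cycle whose spectral gap tends to zero. In the concrete applications of \cref{sub:cases} this obstacle disappears, because the template $\mathcal{S}$ is fixed a priori, so $L$ is literally the same finite connected graph for every $G$ in the family and $\beta_{\max}$ can be read off explicitly.
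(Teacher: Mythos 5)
Your proof follows the paper's own argument essentially verbatim: apply Corollary \ref{corollary:rw}, take $\alpha\le 1-\epsilon$ from the expansion hypothesis on $Cay(G,{\mathcal{S}\choose 2})$, and bound $\lambda(L)$ away from $1$ using connectivity (Condition \textbf{D}) plus the finiteness of the possible local templates, exactly as the paper does. Your closing caveat --- that bounding only the degree of $L$ would not suffice and one genuinely needs the number of vertices of $L$ (equivalently the size of $\mathcal{S}$) to be bounded --- is a fair and worthwhile observation that the paper itself passes over with the bare assertion that ``the number of vertices of $L$ is bounded.''
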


\begin{proof}
Since $H=Cts( G,\mathcal{S} )$ is of bounded-degree, the number of vertices of $L$ is bounded. By condition \hyperref[itemD]{\textbf{D}}, $L$ is connected. 
So, each graph $L$ has convergence rate less than 1. 
There are only finitely many possibilities, so there is a number $\beta'(D)<1$ that is the maximal convergence rate for all the graphs $L$. \\

\par We can use corollary \ref{corollary:rw} and get:
\[\lambda(G_{walk})^2 \le \frac{1}{2}+\frac{1}{2}f(1-\epsilon,\beta')<1\]

\end{proof}
\section{Applications} 
\subsection{Conlon's construction}
We bound the convergence rate of $2D$-random walk in case of Conlon's construction:
\conlons
\begin{proof}
    As noted in section \ref{ssub:conlon}, $H=Cts(G,\mathcal{S})$ is a commutative triplet structure.

In our case: 
	\[
		G_{cay}=Cay(G,S+S-\{0\})
	\]

	and
	\[
		G_{rep}=G_{cay}\raisebox{.5pt}{\textcircled{\raisebox{-.9pt}{r}}}J(S,2)
	\]

	We define $\mu$ to be the maximal non-trivial eigenvalue of the original adjacency matrix $A_{cay}$, and $\alpha$ would be the normalized version of it. \\
	Let $A$ be the adjacency matrix of $Cay(G,S)$. 
	Notice that 
	\[
	 A_{cay} = \frac{1}{2} (A^2-dI)
	\]
	The proof is in \cite[Lemma 2.1]{conlon2017hypergraph}.	Then
	\[
		\mu=\frac{1}{2}(\lambda^{2}d^2 -d)
	\]

	\[
	 |\alpha|=\frac{2}{d(d-1)}\frac{1}{2}|\lambda^{2}d^{2}-d|\le\lambda^{2} ( 1+\frac{1}{d+1} )
	\]

	By corollary \ref{corollary:rw} 
	\[
	 \lambda(G_{walk})^2 \le \frac{1}{2}+\frac{1}{2}f(\alpha,\beta)\le\frac{1}{2}(1+\alpha+\beta)\le\frac{1}{2}(\frac{3}{2}+\alpha)=\frac{3}{4}+\frac{1}{2}\alpha
	\]
	As we know that \[\lambda(J(S,2))=\beta\le\frac{1}{2}\] (we have no
	real hope of it being anything better, because of \cite[Proposition 3]{alon1994random}\footnote{suggesting that bounded-degree expander family of Cayley graphs of abliean groups doesn't exist})

	\[
		\sqrt{\frac{3}{4}+\frac{1}{2}\alpha}\le\sqrt{\frac{3}{4}}\sqrt{1+\frac{2}{3}\alpha}\le\frac{\sqrt{3}}{2} +\frac{1}{2\sqrt{3}}\alpha\sim0.867+0.289\alpha
	\]

	So we get convergence rate of 

	\[
	 \frac{\sqrt{3}}{2} +\frac{1}{2\sqrt{3}}\lambda^2 + O(\frac{1}{d}) 
	\]

\end{proof}
\subsection{3-product case}
First, let us recall the definition of the hypergraph.
\thrprodcase*
We can see that an $e\in E^{1}$ is given by $\{s_{i}g,s_{j}g\}$ for
a certain $i\neq j$ where $1\le i,j\le 3$. We view this edge as of center $g$ and type
$\tau=\{s_{i},s_{j}\}\in S_{i},S_{j}$. Will call $ij$ the template
of the edge. And two edges in the center of $g$ are in the same triplet
if they share a single element. That is, they are adjacent if their
types are adjacent in $J(S_{1}\cup S_{2}\cup S_{3},2)$ and are not
of the same template. 

That is also the description of $L$. It is the graph on types $\mathcal(T)$ that are adjacent in $J(S_{1}\cup S_{2}\cup S_{3},2)$ and are not
of the same template. 

First, we: verify that the required conditions are satisfied.
Condition \hyperref[itemZ]{ \textbf{0} },\hyperref[itemA]{ \textbf{A} },\hyperref[itemC]{ \textbf{C} } can be easily seen that obtained.
Condition \hyperref[itemC]{\textbf{C}} is satisfied by the fact that $S_i$ has the same size.
Condition \hyperref[itm:itemB]{\textbf{B}} is satisfied because of the following observation:

Assuming $t$ is of template $ij$ and $t'$ of $i'j'$ and \[t_2 t_1=t'_2 t'_1\] 
For $g=(g_1,g_2,g_3)$, we can see that $g'=t_2t_1g$ has one position that is unaffected by $ij$.
Since this position has to be the same in both case, $ij$ and $i'j'$ are the same template.

Then since $S_i$ and $S_j$ commute, we have \[t_2t_1=t_1t_2\] and this is the only case.


\medskip  \medskip If we define $a_{i}$ to be the $i$-th element of $S_{1}$ ($b_{i}$ for $S_{2}$ and $c_{i}$ for $S_{3}$). \\
An element $v\in G$ in the link of an element $u\in G$ is characterized
by \[s\in\cup_{i \neq j}S_{i}S_{j} \text{ s.t. }s \cdot u = v\] \\

So if we part the link into 3 parts\footnote{as was done in the original paper \cite{chapman2018expander}} \[V=V_{1}\cup V_{2}\cup V_{3}\]
where
\[V_{3}=\{a_{i},b_{j},e\ |1\le i,j\le d\}\] and similarly for $V_{1},V_{2}$ \\

Two elements in $V_{2}$ and $V_{3}$ are adjacent $(a_{i},e,c_{j})\sim(e,b_{i'},c_{j'})$
iff $j=j'$, and similarly for $V_{1},V_{2}$. \\ 

This graph is called $L$ in the paper by Chapman, Linal and Peled, but we called it here $L'$. 
The spectrum was already analyzed there.
It was found that $\lambda(L')=\frac{1}{2}$ by \cite[Lemma 4.1]{chapman2018expander}.

We notice that this graph is isomorphic to the graph $L$ we defined, by the mapping \begin{align*}
    \psi : V(L') &\longrightarrow V(L) \\
		(a_i,e,c_j) &\longmapsto       \{a_i , c_j\}  \\
		(e,b_i,c_j) &\longmapsto       \{b_i , c_j\} \\
		(a_i,b_j,e) &\longmapsto       \{a_i , b_j\} \\
\end{align*}
 which is clearly an isomorphism. 

\medskip  \medskip Thus, we can finally prove:

\thrprod*

\begin{proof}
As it is a commutative triplet structure, we can apply corollary \ref{corollary:rw}, and get that 
\[\lambda(G_{walk}) \le \sqrt{\frac{1}{2}+\frac{1}{2}f(\alpha,\beta)}\] 
where $\alpha=\lambda(G_{cay})$ and $\beta=\lambda(L)$. 

As $\beta$ was found out to be $\frac{1}{2}$, it remains to find $\alpha$. 

	\medskip  \medskip We describe $G_{cay}$ explicitly. 
	We have \[G_{cay}=Cay(G,S_{2}S_{3})\cup Cay(G,S_{1}S_{3})\cup Cay(G,S_{1}S_{2})\]
	\[A_{i}=Cay(G_{i},S_{i})\] 
	and so we need to sum the different adjacency matrices.
	\[
		A_{G_{cay}}=\frac{1}{3}[A_{1}\otimes A_{2}\otimes I+I\otimes A_{2}\otimes A_{3}+A_{1}\otimes I\otimes A_{3}]
	\]

	We want to find eigenvectors.\\ Assuming $x_{\mu},$ $y_{\nu},$ $z_{\xi}$
	with corresponding eigen values $\mu,\nu,\xi$ in $A_{1},A_{2},A_{3}$
\[
	A_{G_{cay}}(x_{\mu}\otimes y_{\nu}\otimes z_{\xi})=\frac{1}{3}[\mu\nu+\nu\xi+\mu\xi](x_{\mu}\otimes y_{\nu}\otimes z_{\xi})
\]

We have $n^{3}$ eigenvectors and so, this is complete base.
The maximal non-trivial eigenvector of $G_{cay}$ has one of the corresponding eigenvalues $\mu,\nu,\xi$ less than $1$. 

This would be the maximal. So we take 
\[v=x_{\lambda_{3}}\otimes y_{1} \otimes z_{1}\] as the eigenvector, and we get $\frac{1+2\lambda_{3}}{3}$ as the eigenvalue.
\end{proof}
\section{Concluding Remarks}
Recent HDE combinatorial constructions have all used graph product,
in order to achieve $2D$-random walk convergence of bounded degree 
\footnote{Actually, other than those, essentially all such constructions(that provide 2D-random walk convergence and of course, that we know of) are algebraic in nature}

We can see how the commutative relations leads to 4-cycles in the skeleton and eventually to good expansion. 
By viewing the random walk as mixing both inside centers and between centers(using the 4-cycles), we can get the required convergence. 

However, we are still a bit far from the estimated theoretical lower bound of $\nicefrac{1}{2}$ by \cite{kaufman2017high}\footnote{The result there that is $\nicefrac{2}{3} $ refers to a lazy random walk. We have corrected it to match a non-lazy walk. }.

It would be nice and potentially more powerful to base the construction upon non-trivial 4-cycles, possibly getting close to the theoretical limit.
We assume it could be handled similarly. One can also think about the generalized version of Conlon's structure, in which the underlying group is not abliean.
We know for instance that Ramanujan Complex contains many 4-cycles in its 1-skeleton. It would be nice to understand them in similar terms, if possible.

In any case, I hope this structure could encourage one to come up with better applications of it, providing better expansion, than the cases analyzed here. 

Some more remarks concerning things we could have done:

\begin{itemize}
	\item It is worth to mention that neglecting condition \hyperref[itemE]{\textbf{E}}, 
		results in a similar structure, that contains self-loop in types that their generators doesn't commute. 
		And is actually closed to zig-zag product. It can be analyzed similarly, and provide expansion(although less) for other applications. 
		We have left this out for brevity. But we intend to include some of it in the future. 

	\item Though it is not completely verified yet, condition \hyperref[itm:itemB]{\textbf{B}} seems to be redundant, as some other possibilities of 4-cycles 
		would only improve the convergence overall. However, we are not that sure about the bijective property that stems from it. 
		Once again, it complicates things, and we have left out our approach regarding dealing with this case outside the paper.

	 \item We are not sure that using Cayley graph is essential to the structure. Probably not. Once again, it simplifies things considerably. 

\end{itemize}

Possibly it could also be used for analysis of other features such as vertex expansion, geometric overlapping or cosystolic expansion.
We haven't dealt with any of these, as this is not the focus of the paper, but some result were obtained for both cases we studied in the corresponding papers \cite{conlon2017hypergraph} and \cite{chapman2018expander}. 

We suspect this method could provide bounded-degree expanders of higher dimensions too in the future. Some progress for conducting a similar analysis in 4 dimensions has been made, 
and we already have a conjecture concerning this case.

The conjecture is the following:
\begin{conjecture}
    Under the notion in \cite{chapman2018expander},
 Let \(G\) be a d-regular triangle-free graph with n vertices, where
 \(\lambda(G)=(1-\epsilon) d,\) and let \(\Gamma=\mathcal{C}_{G_{[1,1,0]}^{(3)} .} .\) Then $Aux(\Gamma)$ satisfies mixing of 3-dimensional walk as well,  which depends on $\epsilon$.
\end{conjecture}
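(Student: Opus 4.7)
The plan is to extend the triplet-structure framework of this paper to a ``quadruplet structure'' suitable for analyzing $3$-dimensional random walks. I would define $Cts^{(3)}(G,\mathcal{S})$ with $\mathcal{S}\subset {G \choose 4}$ and $3$-faces $\{s_1 g,s_2 g,s_3 g,s_4 g\}$, imposing analogs of conditions \hyperref[itemZ]{\textbf{0}}--\hyperref[itemE]{\textbf{E}}: no illegitimate $4$-cycles in the $1$-skeleton, pairwise commutativity of the elements inside each $4$-tuple of $\mathcal{S}$, and a regularity condition ensuring that every $2$-face sits in a fixed number of centers (the analog of \lemref{Every-edge-is}).

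Next, I would set up three graphs in parallel to \cref{repgraph}: a local graph $L^{(3)}$ on the $2$-faces of $\mathcal{S}$, with adjacency when two triples share a common $4$-tuple; a Cayley graph $G_{cay}^{(3)}$ generated by products of commuting triples from $\mathcal{S}$; and the corresponding replacement product. A step of the $3$-dimensional walk on $G_{walk}^{(3)}$ (on $2$-faces of $\Gamma$) would then split into mixing within a center (a step in $L^{(3)}$) and between centers (a $G_{cay}^{(3)}$ step). The lifting argument of \cref{formal} would show $G_{walk}^{(3)}$ is a quotient of an operator $T$ on the replacement product, and $T^2$ would be bounded by a zig-zag-type inequality in terms of $\lambda(L^{(3)})$ and $\lambda(G_{cay}^{(3)})$. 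Instantiating on the polygraph $G_{[1,1,0]}^{(3)}$, the generators act on disjoint coordinates so commutativity is automatic and $\lambda(G_{cay}^{(3)})$ factors via tensor decomposition as in our $3$-product theorem, yielding a bound in terms of $\lambda(G)=(1-\epsilon)d$; meanwhile $\lambda(L^{(3)})$ would be bounded by extending the Chapman--Linial--Peled calculation (\cite[Lemma 4.1]{chapman2018expander}) one dimension higher, giving a constant $\beta'<1$ independent of $d$.

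The main obstacle I expect is the analog of \lemref{Every-edge-is}: in the $2$-dimensional case each edge sat in \emph{exactly} two centers, and this equal count was crucial for the clean $\frac{1}{2}$--$\frac{1}{2}$ splitting of a walk step. For $3$-dimensional walks one must determine, and presumably constrain $\mathcal{S}$ so that, each $2$-face sits in the correct uniform number of centers; without this the operator $T$ does not faithfully model the walk and the lift fails. A secondary obstacle is verifying that the polygraph $G_{[1,1,0]}^{(3)}$ actually contains enough tetrahedra to support a meaningful $3$-dimensional complex; the triangle-free hypothesis on $G$ prevents some degeneracies but it is not obvious that it forces tetrahedron-richness, and one may need to adjust the ambient multiset (for instance to $[1,1,0,0]$ with a $4$-fold product) to guarantee the $3$-skeleton is well-behaved. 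Once both of these points are settled, the zig-zag bound should deliver the claimed $\epsilon$-dependent mixing rate.
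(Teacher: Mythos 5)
The statement you set out to prove is presented in the paper as a \emph{conjecture}: the authors explicitly say only that ``some progress'' has been made on the higher-dimensional analysis and that ``further details will follow.'' There is no proof in the paper to compare yours against, and your proposal, while it follows the direction the authors clearly intend (replace triples by quadruples, rebuild $L$, $G_{cay}$, the replacement product, the operator $T$, the lift, and the zig-zag bound one dimension up), is a research plan rather than a proof. The two obstacles you name yourself are exactly the open core of the problem, not deferrable technicalities. The whole method of the paper hinges on \lemref{Every-edge-is}: every edge lies in \emph{exactly} two centers, which is what makes $T=\frac{1}{2}P_R+\frac{1}{2}P_RP_B$ a faithful model of the walk and makes the labeling map a $2$-to-$1$ covering in \cref{formal}. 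You have not determined whether a $2$-face of a quadruplet structure lies in a uniform number of centers, what that number is, or what conditions on $\mathcal{S}$ would force it; without this the splitting of $T$, the lift to $G_{walk}^{(3)}$, and the zig-zag estimate all fail to get started.

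Separately, the object $\Gamma=\mathcal{C}_{G_{[1,1,0]}^{(3)}}$ in the conjecture needs the polygraph to contain enough $4$-cliques to carry a nontrivial $3$-skeleton, and your own suggested fix --- passing to $[1,1,0,0]$ over a $4$-fold product --- changes the statement being proved rather than establishing it. You also assert that $\lambda(L^{(3)})$ ``would be bounded'' by extending the Chapman--Linial--Peled link computation one dimension higher, but give no argument, and that computation is one of the quantitatively delicate inputs in the $2$-dimensional case. In short: your outline is a plausible and well-motivated attack consistent with the paper's closing remarks, but every load-bearing step remains conjectural, so it cannot be accepted as a proof of the conjecture.
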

Further details will follow.

\subsection{Acknowledgements}
We thank Nati Linal and Michael Chapman, for fruitful discussions.
We also thank David Conlon for an inspiring lecture, and some more inspiring ideas\footnote{some of them appear in \cite{conlon2018hypergraph}}.

\printbibliography

\end{document}